\newtheorem{theorem}{Theorem}[section]
\newtheorem{lemma}[theorem]{Lemma}
\newtheorem{conjecture}[theorem]{Conjecture}
\newtheorem{proposition}[theorem]{Proposition}
\newtheorem{corollary}[theorem]{Corollary}
\theoremstyle{definition}
\theoremstyle{definition}\newtheorem{example}[theorem]{Example}
\theoremstyle{definition}\newtheorem{remark}[theorem]{Remark}
\theoremstyle{definition}
\newcommand{\rank}{\operatorname{rank}}
\begin{document}

\title{Rigid frameworks with dilation constraints}

\author{Sean Dewar\thanks{
Mathematics, University of Bristol, BS8 1QU, UK. E-mail: sean.dewar@bristol.ac.uk} \and Anthony Nixon\thanks{Mathematics and Statistics, Lancaster University, LA1 4YF, UK. E-mail: a.nixon@lancaster.ac.uk} \and Andrew Sainsbury\thanks{Mathematics and Statistics, Lancaster University, LA1 4YF, UK. E-mail: a.sainsbury2@lancaster.ac.uk}
}

\date{}

\maketitle

\begin{abstract}
We consider the rigidity and global rigidity of bar-joint frameworks in Euclidean $d$-space under additional dilation constraints in specified coordinate directions. In this setting we obtain a complete characterisation of generic rigidity. We then consider generic global rigidity. In particular, we provide an algebraic sufficient condition and a weak necessary condition. We also construct a large family of globally rigid frameworks and conjecture a combinatorial characterisation when most coordinate directions have dilation constraints.
\end{abstract}

\section{Introduction}

A bar-joint \emph{framework} $(G,p)$ is the combination of a (finite, simple) graph $G=(V,E)$ and a map $p:V\rightarrow \mathbb{R}^d$ that assigns positions in Euclidean $d$-space to the vertices of $G$ (and hence straight line segments of given length to the edges). Informally, the framework is \emph{rigid} if its lengths locally determine the shape. That is, if every edge-length preserving continuous motion of the vertices arises from a Euclidean isometry. 

Determining the rigidity or flexibility (non-rigidity) of a framework is a crucial problem in a variety of practical applications from  wireless sensor networks \cite{Sensors} to control of robotic formations \cite{Robotics}, and rigidity theoretic tools have recently been used to impact on diverse mathematical problems such as the lower bound theorem for manifolds \cite{CJT,Kal}. However, it is computationally challenging to determine the rigidity of a given framework (when $d>1$) \cite{abbott}. To get around this issue we will consider generic frameworks; $(G,p)$ is \emph{generic} if the set of coordinates of the points $p(v)$, $v\in V$, are distinct and form an algebraically independent set over $\mathbb{Q}$\footnote{Much weaker variants of genericity are possible but a distraction from the main topic of this paper.}. 

Generically, rigidity depends only on the underlying graph. That is, if a generic framework $(G,q)$ in $\mathbb{R}^d$ is rigid then every generic framework $(G,p)$ in $\mathbb{R}^d$ is rigid. The cornerstone problem of rigidity theory is to determine precisely the class of graphs that are rigid. As far back as Maxwell \cite{Max} necessary conditions were known but these are insufficient for all $d\geq 3$. When $d=1$ it is folklore that a graph is rigid if and only if it is connected. When $d=2$, a celebrated theorem first obtained by Polaczek-Geiringer \cite{PG}, and often referred to as Laman's theorem \cite{laman}, characterises rigid graphs. However for all $d\geq 3$, characterising rigid graphs remains a challenging open problem of key applied and theoretical importance.

Motivated by this, the present article considers the rigidity of Euclidean frameworks under additional ``coordinate dilation'' constraints. These frameworks first arose in the context of frameworks on surfaces \cite{JNglobal} where characterising rigidity with these dilation constraints was important in understanding stress matrices and global rigidity.
This kind of constraint is previously unstudied in the Euclidean context. 
A somewhat related setting occurred recently in the study of unmanned aerial vehicles \cite{capr}. We will describe this at the end of \Cref{sec:char}.

We give precise descriptions of rigidity in $d$-dimensions under dilation constraints linking them to ``ordinary'' rigidity in lower dimensions, and hence giving purely combinatorial characterisations in arbitrary dimension provided there are sufficiently many coordinate dilation constraints. As a consequence (\Cref{cor:union}) we establish that the underlying rigidity matroid of our dilation constraint setting is the union of a smaller dimensional rigidity matroid and the uniform matroid of a specified rank. In this sense our results are similar to recent results obtained in \cite{DK} for cylindrical normed spaces and \cite{SST} for coordinated edge length motions.

We then investigate variants for global rigidity. Global rigidity of bar-joint frameworks asks, more strongly than rigidity, for the given framework to be unique to ambient motions of the space. It follows from a deep result of Gortler, Healy and Thurston \cite{GHT} that, generically, global rigidity depends only on the underlying graph and the combinatorial situation mirrors rigidity. In dimension 1 a graph is generically globally rigid if and only if it is 2-connected, in dimension 2 a combination of results due to Hendrickson \cite{hendrickson}, Connelly \cite{C2005} and Jackson and Jord\'an \cite{J&J} give a complete combinatorial characterisation, whereas  when $d\geq 3$ only some partial results are known.

In \Cref{sec:global} we develop an augmented definition of equilibrium stress and stress matrix and use them to give an analogue of Connelly's sufficient condition \cite{C2005} that applies to global rigidity in $\mathbb{R}^d$ with dilation constraints. We deduce from this that a well known construction operation (1-extension) preserves global rigidity and then discuss necessary conditions. We conclude the paper, in \Cref{sec:conclude}, with two open problems on global rigidity.

\section{Rigidity theoretic preliminaries}

Let $G=(V,E)$ be a graph. Two frameworks $(G,p)$ and $(G,q)$ are said to be \emph{equivalent} if 
\begin{equation}\label{eq:constraints}
    \|p(v) - p(w)\| = \|q(v) - q(w)\| \qquad \text{ for all } vw \in E.
\end{equation}
More strongly, they are \emph{congruent} if \cref{eq:constraints} holds for any pair of vertices $v,w\in V$. 
The framework $(G,p)$ in $\mathbb{R}^d$ is \emph{$d$-rigid} if every equivalent framework $(G,q)$ in a neighbourhood of $p$ in $\mathbb{R}^d$ is obtained from $(G,p)$ by a composition of isometries of $\mathbb{R}^d$. Given that $G$ contains at least $d$ vertices and $(G,p)$ affinely spans $\mathbb{R}^d$, this is equivalent to requiring that $(G,q)$ is congruent to $(G,p)$. Moreover, $(G,p)$ in $\mathbb{R}^d$ is \emph{globally $d$-rigid} if every equivalent framework $(G,q)$ in $\mathbb{R}^d$ is obtained from $(G,p)$ by a composition of isometries of $\mathbb{R}^d$.
A framework is \emph{minimally $d$-rigid} if it is $d$-rigid, but any framework formed by removing edges is not $d$-rigid.

Differentiating the distance constraints given in \cref{eq:constraints}, we obtain the following.
An {\em infinitesimal motion} of $(G, p)$ is a map $\dot
p:V\to \mathbb{R}^d$ satisfying the system of linear equations:
\begin{equation*}
(p(v)-p(w))\cdot (\dot p(v)-\dot p(w)) = 0 \qquad \mbox{ for all $vw \in E$}.
\end{equation*}
The framework $(G,p)$ is \emph{infinitesimally $d$-rigid} if the only infinitesimal motions arise from isometries of $\mathbb{R}^d$.
The {\em rigidity matrix $R (G, p)$} of the framework $(G, p)$ is  the
matrix of coefficients of this system of equations for the unknowns
$\dot p$. Thus $R (G, p)$ is a $|E|\times d|V|$ matrix, in
which: the row indexed  by an edge $vw \in E$ has $p(v)-p(w)$ and
$p(w)-p(v)$ in the $d$ columns indexed by $v$ and $w$ respectively, and zeros elsewhere.

Rigidity and infinitesimal rigidity are linked by the following theorem.

\begin{theorem}[Asimow and Roth \cite{asi-rot}]\label{thm:AR}
For a generic framework $(G,p)$ in $\mathbb{R}^d$ on at least $d$ vertices, the following are equivalent:
\begin{enumerate}
    \item $(G,p)$ is $d$-rigid;
    \item $(G,p)$ is infinitesimally $d$-rigid;
    \item $\rank R(G,p)=d|V|-\binom{d+1}{2}$.
\end{enumerate}
\end{theorem}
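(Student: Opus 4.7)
The plan is to adapt the original argument of Asimow and Roth, viewing configurations as an algebraic variety and applying the implicit function theorem at generic points. I would introduce the squared-edge-length map $f_G \colon (\mathbb{R}^d)^V \to \mathbb{R}^{|E|}$ defined by $f_G(q)_{vw} = \|q(v)-q(w)\|^2$, whose Jacobian at $p$ is (twice) $R(G,p)$. Write $M := f_G^{-1}(f_G(p))$ for the set of configurations equivalent to $p$, and write $N$ for the orbit of $p$ under the Euclidean isometry group $\mathrm{Iso}(\mathbb{R}^d)$. Then $N \subseteq M$ always, and $(G,p)$ is rigid precisely when $N$ and $M$ coincide in a neighbourhood of $p$.

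For the equivalence of (ii) and (iii), I would identify the space of trivial infinitesimal motions with the tangent space $T_p N$, which has dimension $\binom{d+1}{2}$ provided the image of $p$ affinely spans a subspace of dimension at least $d-1$; this is guaranteed at a generic $p$ when $|V|\geq d$, since the stabiliser in $\mathrm{Iso}(\mathbb{R}^d)$ of such an affine span is zero-dimensional. Because trivial infinitesimal motions always lie in $\ker R(G,p)$, condition (ii) --- the absence of any further infinitesimal motions --- is equivalent to $\dim \ker R(G,p) = \binom{d+1}{2}$, which is precisely the rank identity in (iii).

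The core of the argument is the equivalence of (i) with (iii), where genericity is used in an essential way. Since $\rank R(G,p)$ is a lower semicontinuous function of $p$ that attains its maximum on a Zariski-open set containing every generic configuration, the rank is locally constant near any generic $p$; by the constant-rank theorem, $M$ is therefore a smooth manifold in a neighbourhood of $p$ of dimension $d|V| - \rank R(G,p)$. The orbit $N$ is also smoothly embedded at $p$ of dimension $\binom{d+1}{2}$ under the affine-span hypothesis above. If (iii) holds then $\dim_p M = \dim_p N$ and the inclusion $N \subseteq M$ forces local equality, giving rigidity; if (iii) fails then $\dim_p M > \dim_p N$, and the extra directions in $M$ transverse to $N$ produce equivalent but non-congruent configurations arbitrarily close to $p$, contradicting rigidity.

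The main technical obstacle is the direction (i) $\Rightarrow$ (iii): one must ensure that the locally constant rank regime near a generic $p$ really furnishes a neighbourhood in which $M \setminus N$ is nonempty, and that the continuous flex so produced actually exits every small ball around $p$ rather than reducing to a congruence. This is where the full strength of genericity --- as opposed to mere smoothness at $p$ --- is indispensable, since it supplies both the constant-rank behaviour needed for the implicit function theorem and the triviality of the isometry stabiliser needed for $N$ to be an embedded submanifold of the expected dimension.
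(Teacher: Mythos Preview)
The paper does not prove this statement at all: it is quoted as a known result and attributed to Asimow and Roth \cite{asi-rot}, with no argument given. There is therefore nothing in the paper to compare your proposal against.

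That said, your outline is a faithful reconstruction of the original Asimow--Roth proof and is essentially correct. The identification of $\ker R(G,p)$ with the tangent space to $M$ at $p$, the dimension count $\dim T_pN = \binom{d+1}{2}$ when $p$ has at least $d$ affinely independent points, and the use of the constant-rank theorem at a generic (hence maximal-rank, hence locally constant-rank) configuration to make $M$ a manifold near $p$ are exactly the ingredients in \cite{asi-rot}. One small point worth tightening in the direction (iii) $\Rightarrow$ (i): equality of dimensions together with $N \subseteq M$ does not by itself force $N$ to be open in $M$ --- you should invoke that $N$ is an \emph{embedded} submanifold of $(\mathbb{R}^d)^V$ (being an orbit of a proper Lie group action with discrete stabiliser), so that the inclusion $N \hookrightarrow M$ of equidimensional manifolds is a local diffeomorphism near $p$.
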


For the next result, we recall that a graph $G=(V,E)$ is \emph{$(d,\binom{d+1}{2})$-sparse} if $|E'| \leq d|V'|-\binom{d+1}{2}$ for all subgraphs $(V',E')$ on at least $d$ vertices. A $(d,\binom{d+1}{2})$-sparse graph is $(d,\binom{d+1}{2})$-tight if $|E|=d|V|-\binom{d+1}{2}$.

\begin{lemma}[Maxwell \cite{Max}]
Let $(G,p)$ be minimally $d$-rigid on at least $d$ vertices. Then $G$ is $(d,\binom{d+1}{2})$-tight.
\end{lemma}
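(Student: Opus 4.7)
The plan is to combine \Cref{thm:AR} with an elementary linear independence argument on the rows of the rigidity matrix $R(G,p)$. We may assume $(G,p)$ is generic: minimal $d$-rigidity of the underlying graph is a combinatorial property that passes to any generic realisation, so any counterexample to the claim can be taken to be generic.

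First I would establish the global count $|E| = d|V| - \binom{d+1}{2}$. By minimality, for every edge $e \in E$ the framework $(G-e, p)$ fails to be $d$-rigid; \Cref{thm:AR}(iii) then forces $\rank R(G-e,p) < \rank R(G,p)$, so the row of $R(G,p)$ indexed by $e$ is not in the linear span of the remaining rows. Since $e$ was arbitrary, the rows of $R(G,p)$ are linearly independent, and hence
\[
|E| \;=\; \rank R(G,p) \;=\; d|V| - \binom{d+1}{2}.
\]

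Next I would prove the subgraph sparsity inequality. Let $H = (V', E')$ be a subgraph with $|V'| \geq d$. Because the row of $R(G,p)$ indexed by an edge $vw \in E'$ has non-zero entries only in the columns associated to $v$ and $w$, these rows agree, after deleting the zero columns indexed by $V \setminus V'$, with the rows of the rigidity matrix $R(H, p|_{V'})$. A subset of linearly independent rows is independent, so $|E'| = \rank R(H, p|_{V'})$. It now suffices to verify $\rank R(H, p|_{V'}) \leq d|V'| - \binom{d+1}{2}$. For $|V'| \geq d+1$, genericity ensures that $(H, p|_{V'})$ affinely spans $\mathbb{R}^d$, so the full $\binom{d+1}{2}$-dimensional space of infinitesimal Euclidean isometries of $\mathbb{R}^d$ embeds into $\ker R(H, p|_{V'})$, giving the bound. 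In the boundary case $|V'| = d$, the bound simplifies to $|E'| \leq \binom{d}{2}$, the edge count of $K_d$, which is trivial.

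The only mild obstacle is the last inequality, which turns on correctly identifying the dimension of the space of infinitesimal isometries of a subframework; this is handled cleanly by genericity once $|V'| \geq d+1$, with the degenerate case $|V'| = d$ settled by the trivial complete-graph bound.
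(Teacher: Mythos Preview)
The paper does not prove this lemma; it is stated with a citation to Maxwell and no argument is given. Your proof is the standard row-independence argument via \Cref{thm:AR}, and for a generic $(G,p)$ it is correct: minimality forces every row of $R(G,p)$ to lie outside the span of the others, giving $|E|=\rank R(G,p)=d|V|-\binom{d+1}{2}$, and restricting to the rows indexed by $E'$ then yields the sparsity bound from the $\binom{d+1}{2}$-dimensional kernel of trivial infinitesimal isometries (with the boundary case $|V'|=d$ handled by the trivial $K_d$ bound, as you note).

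The one soft spot is your opening reduction to a generic realisation. From ``$(G,p)$ is minimally $d$-rigid'' at some specific $p$ you cannot immediately pass to a generic $(G,q)$ with the same property: rigidity of $(G,p)$ does force generic rigidity of $G$ (by upper semicontinuity of fibre dimension for the rigidity map), but flexibility of $(G-e,p)$ at a particular $p$ does \emph{not} force $G-e$ to be generically flexible, so minimality need not transfer. In this paper the point is moot, since every framework under consideration is generic and \Cref{thm:AR} applies directly; but if you want the lemma in the exact generality stated you should either add a genericity hypothesis or give a separate argument for non-generic $p$.
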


The converse to Maxwell's lemma holds when $d\leq 2$ \cite{PG}. However it remains an open problem to determine which $(d,\binom{d+1}{2})$-tight graphs are $d$-rigid in all higher dimensions. One motivation for this paper is that additional understanding of this problem may be developed by exploring the problem with additional constraints.

An \emph{equilibrium stress} of a framework $(G,p)$ in $\mathbb{R}^d$ is a vector in the cokernel of $R(G,p)$. Equivalently, a vector $\omega\in \mathbb{R}^{|E|}$ is an
\emph{equilibrium stress} for $(G,p)$ if, for all
$v\in V$,
\begin{equation*}
\sum_{w\in N_G(v)} \omega_{vw}(p(v)-p(w)) =0,
\end{equation*}
where $N_G(v)$ denotes the neighbour set of $v$.
Let $n=|V|$. The
\emph{stress matrix} $\Omega(\omega)$ is a symmetric $n\times n$-matrix in which the rows and columns are indexed by the vertices and in which the off
diagonal entry in row $v$ and column $w$ is $-\omega_{vw}$, and the
diagonal entry in row $v$  is $\sum_{w\in V} \omega_{vw}$. (Here $\omega_{vw}$ is taken to be equal to zero if $vw\not\in E$.)
Equivalently,
the stress matrix $\Omega(\omega)$ is the Laplacian matrix of the weighted graph $(G,\omega)$.

\section{Infinitesimal \texorpdfstring{\lowercase{$(d,k)$}}{\lowercase{(d,k)}}-rigidity}

Let $(G,p)$ be a framework in $\mathbb{R}^d$.
Fix some $k \in \{1,\ldots,d-1\}$.
For each coordinate $i \in \{1,\ldots,d\}$,
set $p_i :  V \rightarrow \mathbb{R}$ be the restriction of $p$ to the $i$-th coordinate.
Using this notation, we see that $p(v) = (p_1(v), \ldots p_d(v))$ for every vertex $v \in V$.
With this, we define the map
\begin{align*}
    \tilde{p} :V \rightarrow \mathbb{R}^{d-k}, ~ v \mapsto (p_1(v), \ldots, p_{d-k}(v)).
\end{align*}
Two frameworks $(G,p)$ and $(G,q)$ in $\mathbb{R}^d$ are \emph{$(d,k)$-equivalent} if they are equivalent and for each $i \in \{d-k+1, \ldots, d\}$ there exists a scalar $\alpha_i \neq 0$ such that $p_i = \alpha_i q_i$.
If we fix a vertex $v_0 \in V$ where $p_i(v_0) \neq 0$ for each $i \in \{d-k+1, \ldots, d\}$,
this can be represented by the following constraint system (under the assumption that $q$ is a realisation with $q_i(v_0) \neq 0$ for $i \in \{d-k+1, \ldots, d\}$):
\begin{align}
    \|p(v) - p(w)\| &= \|q(v) - q(w)\|  \qquad \text{ for all } vw \in E, \label{eqn1}\\
    \frac{p_i(v)}{p_i(v_0)} &= \frac{q_i(v)}{q_i(v_0)}  \qquad \text{ for all } v \in V \setminus \{v_0\} \text{ and } i \in \{d-k+1, \ldots, d\}.\label{eqn2}
\end{align}

We note that the precise choice of constraints in \cref{eqn2} was made for convenience. For most frameworks, constraints corresponding to any connected graph would create precisely the same constraint system as we show in the following simple lemma.

\begin{lemma}
Let $G = (V,E)$, $H = (V,F)$ be two graphs with the same vertex set,
and let $(G,p),(G,q)$ be frameworks in $\mathbb{R}^d$ where $p_i(v) \neq 0$ and $q_i(v) \neq 0$ for all $v \in V$ and $i \in \{d-k+1, \ldots, d\}$.
If $H$ is connected,
then \cref{eqn2} holds if and only if 
\begin{equation}
    \frac{p_i(v)}{p_i(w)} = \frac{q_i(v)}{q_i(w)} \qquad \text{ for all } vw \in F \text{ and } i \in \{d-k+1, \ldots, d\}.\label{eqn2star}
\end{equation}
\end{lemma}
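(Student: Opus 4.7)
The plan is to prove the two implications separately, observing that only the $(\text{\ref{eqn2star}}) \Rightarrow (\text{\ref{eqn2}})$ direction actually uses the hypothesis that $H$ is connected.

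For the direction $(\text{\ref{eqn2}}) \Rightarrow (\text{\ref{eqn2star}})$, I would simply note that (\ref{eqn2}) uniquely pins down each ratio $p_i(v)/p_i(v_0)$, and therefore for any pair $v,w \in V$ (in particular any edge of $F$) the identity
\[
\frac{p_i(v)}{p_i(w)} \;=\; \frac{p_i(v)/p_i(v_0)}{p_i(w)/p_i(v_0)}
\]
together with the analogous identity for $q$ forces (\ref{eqn2star}). The non-vanishing hypotheses on $p_i$ and $q_i$ are exactly what is needed to divide freely, so this direction is a one-line calculation and no properties of $H$ are used.

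For the reverse direction, I would fix $v \in V \setminus \{v_0\}$ and $i \in \{d-k+1,\ldots,d\}$, and use connectedness of $H$ to pick a path $v_0 = u_0, u_1, \ldots, u_\ell = v$ in $H$. Every consecutive pair lies in $F$, so (\ref{eqn2star}) applies to each step, and the telescoping product
\[
\frac{p_i(v)}{p_i(v_0)} \;=\; \prod_{j=1}^{\ell} \frac{p_i(u_j)}{p_i(u_{j-1})} \;=\; \prod_{j=1}^{\ell} \frac{q_i(u_j)}{q_i(u_{j-1})} \;=\; \frac{q_i(v)}{q_i(v_0)}
\]
delivers (\ref{eqn2}). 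Again the non-vanishing hypothesis is used to guarantee that each fraction in the telescoping product is defined.

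There is no real obstacle here: the lemma is essentially the statement that the constraints in (\ref{eqn2}) and (\ref{eqn2star}) both cut out the same linear subspace once one passes to logarithms of the coordinates $p_i, q_i$, where the conditions become additive along paths and connectedness of $H$ is precisely what is needed to recover all pairwise differences from the edge-differences.
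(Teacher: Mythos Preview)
Your proposal is correct and follows essentially the same approach as the paper: both directions are handled by the same telescoping-ratio argument, using connectedness of $H$ only for the implication $(\ref{eqn2star}) \Rightarrow (\ref{eqn2})$. The only cosmetic difference is that the paper writes the forward direction as a product $\frac{p_i(v)}{p_i(v_0)}\cdot\frac{p_i(v_0)}{p_i(w)}$ rather than a quotient of ratios, and orients the path from $v$ to $v_0$ rather than from $v_0$ to $v$.
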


\begin{proof}
Suppose \cref{eqn2} holds.
Choose any $vw \in F$ and any $i \in \{d-k+1, \ldots, d\}$.
Then for any $i \in \{d-k+1, \ldots, d\}$, we have $$\frac{p_i(v)}{p_i(w)} = \frac{p_i(v)}{p_i(v_0)} \cdot \frac{p_i(v_0)}{p_i(w)} =  \frac{q_i(v)}{q_i(v_0)} \cdot \frac{q_i(v_0)}{q_i(w)} = \frac{q_i(v)}{q_i(w)}.$$ 
Hence, \cref{eqn2star} holds.

Suppose \cref{eqn2star} holds.
Choose any $v \in V \setminus \{v_0\}$ and any $i \in \{d-k+1, \ldots, d\}$.
Then since $H$ is connected, there exists a path from $v$ to $v_0$ in $H$, say $(v, v_1, v_2, \ldots, v_t,v_0)$.
For $i \in \{d-k+1, \ldots, d\}$ we have
\begin{align*}
    \frac{p_i(v)}{p_i(v_0)} = \frac{p_i(v)}{p_i(v_1)} \frac{p_i(v_1)}{p_i(v_2)} \ldots \frac{p_i(v_{t})}{p_i(v_0)} = \frac{q_i(v)}{q_i(v_1)} \frac{q_i(v_1)}{q_i(v_2)} \ldots \frac{q_i(v_{t})}{q_i(v_0)} = \frac{q_i(v)}{q_i(v_0)}. 
\end{align*}
Hence, \cref{eqn2} holds.
\end{proof}

The Jacobian derivative of these constraints\footnote{Technically speaking, we are actually taking the derivative of the concatenation of the constraints in \cref{eqn1} after squaring then halving both sides and the constraints given by \cref{eqn2}.} is the $(|E|+k(|V|-1)) \times d|V|$ matrix
\begin{align*}
    J_{v_0}(G,p) =
    \begin{bmatrix}
        R(G,\tilde{p}) & R(G,p_{d-k+1}) & \cdots & R(G,p_{d}) \\
        \mathbf{0} & M_{d-k+1} &   & \mathbf{0} \\
        \vdots &       &  \ddots    &      \\
        \mathbf{0} & \mathbf{0}  &  & M_d
    \end{bmatrix},
\end{align*}
where $R(G,\tilde{p})$ and $R(G,p_i)$, for $d-k+1\leq i \leq d$, are the rigidity matrices of the $(d-k)$-dimensional framework $(G,\tilde{p})$ and the 1-dimensional frameworks $(G,p_i)$ respectively,
and $M_i$ is the matrix with rows labelled by $V\setminus\{v_0\}$, columns labelled by $V$ and entries
\begin{align*}
    M_i(v,w) =
    \begin{cases}
        1/p_i(v_0) &\text{if $w = v$},\\
        -p_i(v)/p_i(v_0)^2 &\text{if $w = v_0$},\\
        0 &\text{otherwise}.
    \end{cases}
\end{align*}

\begin{example}
Let $d=2$, $k=1$, $G=K_3$ and $V(K_3)=\{v_0,v_1,v_2\}$. Then $i=d-k+1=2$, and for illustration we put $\tilde p(v_i)=x_i$, $p_2(v_i)=y_i$, so that $J_{v_0}(K_3,p) = $
\[ 
\begin{pmatrix} 
x_0-x_1 & x_1-x_0 & 0 &y_0-y_1 & y_1- y_0 & 0 \\
x_0-x_2 & 0 & x_2-x_0 &y_0-y_2 & 0 & y_2-y_0 \\
0 & x_1-x_2 & x_2-x_1 &0 & y_1-y_2 & y_2-y_1 \\
0&0&0&-\frac{y_1}{y_0^2}&\frac{1}{y_0}&0\\
0&0&0&-\frac{y_2}{y_0^2}&0& \frac{1}{y_0}
\end{pmatrix}. \]
\end{example}

We now define $(G,p)$ to be \emph{infinitesimally $(d,k)$-rigid} if and only if either $G$ is a complete graph and the set $\{p(v) : v \in V\}$ has affine dimension $\min \{ d, |V|-1\}$ or $G$ is not complete and $\rank J_{v_0}(G,p) = d|V| - \binom{d-k+1}{2}$.
We define a graph $G$ to be \emph{$(d,k)$-rigid} if there exists a realisation $p:V \rightarrow \mathbb{R}^d$ where $(G,p)$ is infinitesimally $(d,k)$-rigid.
Equivalently,
$G$ is $(d,k)$-rigid if every generic $(G,p)$ in $\mathbb{R}^d$ is infinitesimally $(d,k)$-rigid.
Suppose $J_{v_0}(G,p)$ has linearly independent rows and $\rank J_{v_0}(G,p) = d|V| - \binom{d-k+1}{2}$. Then 
\begin{equation*}
    d|V| - \binom{d-k+1}{2}=|E|+k(|V|-1)=(d-k)|V|- \binom{d-k+1}{2} + k,
\end{equation*}
and infinitesimal $(d,k)$-rigidity is lost if an edge is removed.
Hence we say that a graph $G$ is \emph{minimally $(d,k)$-rigid} if it is $(d,k)$-rigid and $|E|=(d-k)|V| - \binom{d-k+1}{2} + k$. 

As in the standard theory of bar-joint rigidity one can define continuous $(d,k)$-rigidity. In the generic case, using the technique of Asimow and Roth \cite{asi-rot}, one can show that continuous $(d,k)$-rigidity and infinitesimal $(d,k)$-rigidity coincide for generic frameworks. Hence we drop infinitesimal in what follows and refer simply to $(d,k)$-rigidity.

\begin{example}
\label{ex:c4}
Suppose $d=2$, $k=1$ and $G=C_4$ be the cycle with vertex set $\{v_1,v_2,v_3,v_4\}$ and edge set $\{v_1v_2,v_2v_3,v_3v_4,v_1v_4\}$. Then putting $p(v_1)=(1,1)$, $p(v_2)=(2,1)$, $p(v_3)=(2,2)$ and $p(v_4)=(1,2)$ gives a framework $(C_4,p)$ such that $\rank J_{v_0}(C_4,p)=7=2|V|-\binom{2}{2}$ (see \Cref{fig:basic}). Since $|E|=4=1\cdot 4-\binom{2}{2}+1$, $(C_4,p)$ is minimally $(2,1)$-rigid. Since rank is maximised at generic configurations, the same conclusion holds for any generic framework $(C_4,q)$. 
Suppose on the other hand that $d=3,k=2$ and $G=C_4$. Then $\rank J_{v_0}(C_4,p)\leq 10 < 3|V|-1=11$ so $(C_4,p)$ is not (infinitesimally) $(3,2)$-rigid for any $p$.
\end{example}

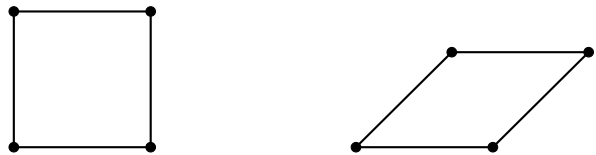
\begin{figure}[h]
\begin{center}
\begin{tikzpicture}[scale=0.9]
\begin{scope}
\filldraw (0,0) circle (2pt);
\filldraw (2,0) circle (2pt);
\filldraw (0,2) circle (2pt);
\filldraw (2,2) circle (2pt);


 \draw[black,thick]
  (0,0) -- (2,0) -- (2,2) -- (0,2) --  (0,0);
  
\end{scope}

\begin{scope}[shift={(5,0)}, scale=2]
\filldraw (0,0) circle (1pt);
\filldraw (1,0) circle (1pt);
\filldraw (0.7,0.7) circle (1pt);
\filldraw (1.7,0.7) circle (1pt);


 \draw[black,thick]
  (0,0) -- (1,0) -- (1.7,0.7) -- (0.7,0.7) --  (0,0);
  
\end{scope}

\end{tikzpicture}
\end{center}
\caption{The 2-dimensional framework described in \Cref{ex:c4} is depicted on the left. This framework is not 2-rigid since there is a non-trivial continuous deformation taking it to the framework on the right. Nevertheless the framework is $(2,1)$-rigid since the dilation constraints in the $y$-coordinates prevent any nontrivial motion. The intuition behind this is to first note that translation in the $y$-direction and rotation evidently break the dilation constraints. Consider now the nontrivial motion depicted. 
The top left vertex follows the path $\theta \mapsto (1 + \sin \theta, 1 + \cos \theta)$ and the top right vertex follows the path $\theta \mapsto (2 + \sin \theta, 1 + \cos \theta)$.
As the bottom two vertices have $y$-coordinate 1,
the dilation constraints require that the $y$-coordinates of the top two vertices -- both of which are $1 + \cos \theta$ -- are constant during the motion,
a clear contradiction.
}
\label{fig:basic}
\end{figure}

\section{Characterising \texorpdfstring{\lowercase{$(d,k)$}}{\lowercase{(d,k)}}-rigidity}
\label{sec:char}

To characterise $(d,k)$-rigidity we will first show that there is a more convenient matrix representation. Recall that the usual $d$-dimensional (squared) rigidity map $f_{G,d}$  is the map 
\begin{equation*}
    f_{G,d}:\mathbb{R}^{d|V|}\rightarrow \mathbb{R}^{|E|}, ~ p \mapsto \left(\| p(v)-p(w)\|^2 \right)_{vw\in E}.
\end{equation*}
Given the 1-dimensional rigidity map $f_{G,1}$, 
we define the $|E| \times (d-k)|V|+k$ matrix 
\begin{align*}
    DR_k(G,p) := 
    \begin{bmatrix}
        R(G,\tilde{p}) & f_{G,1}(p_{d-k+1}) & \cdots & f_{G,1}(p_d)
    \end{bmatrix}.
\end{align*}

This matrix can be used to determine infinitesimal $(d,k)$-rigidity.
\begin{example}
Let $d=2$, $k=1$, $G=K_3$ and $V(K_3)=\{v_0,v_1,v_2\}$. Put $p(v_i)=(x_i,y_i)$ for $0 \leq i \leq 2$. Then we have 
\[ DR_1(K_3,p) = 
\begin{pmatrix} 
x_0-x_1 & x_1-x_0 & 0 &(y_0-y_1)^2  \\
x_0-x_2 & 0 & x_2-x_0 &(y_0-y_2)^2  \\
0 & x_1-x_2 & x_2-x_1 &(y_1-y_2)^2  
\end{pmatrix}. \]
\end{example}
\begin{theorem}\label{t:matrixrank}
    Let $(G,p)$ be a framework with a vertex $v_0 \in V$ where $p_i(v_0) \neq 0$ for each $i \in \{d-k+1, \ldots, d\}$.
    Then $(G,p)$ is infinitesimally $(d,k)$-rigid if and only if 
    \begin{equation*}
        \rank DR_k(G,p) = (d-k)|V| - \binom{d-k+1}{2} + k,
    \end{equation*}
    or $G$ is a complete graph and the set $\{p(v) : v \in V\}$ has affine dimension $\min \{ d, |V|-1\}$.
\end{theorem}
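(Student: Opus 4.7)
The plan is to establish the identity
\[
\rank J_{v_0}(G,p) = \rank DR_k(G,p) + k(|V|-1).
\]
Once this holds, substituting the definition of infinitesimal $(d,k)$-rigidity in the non-complete case, namely $\rank J_{v_0}(G,p) = d|V| - \binom{d-k+1}{2}$, and rearranging gives $\rank DR_k(G,p) = (d-k)|V| - \binom{d-k+1}{2} + k$, which is the stated condition. The complete-graph case is included by definition and so passes through to the theorem statement verbatim, so the real work is the rank identity for arbitrary $G$.

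To prove the identity I would apply two rounds of elementary operations to $J_{v_0}(G,p)$. First, for each $i \in \{d-k+1,\ldots,d\}$ and each edge $vw \in E$ I would use the rows of $M_i$ indexed by the non-$v_0$ endpoints of $vw$ to clean out the block-$i$ portion of the edge row: subtracting $(p_i(v)-p_i(w))\,p_i(v_0)$ times the $M_i$-row indexed by $v$ (and similarly for $w$ when $w\neq v_0$) from the $vw$-row zeroes every entry in block $i$ except possibly at the column indexed by $v_0$. A direct telescoping computation, handled uniformly in the two subcases $v_0 \notin \{v,w\}$ and $v_0 \in \{v,w\}$, shows the surviving $v_0$-entry equals $(p_i(v)-p_i(w))^2 / p_i(v_0)$. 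Second, for each $i$ and each $v \neq v_0$, I would add $p_i(v)/p_i(v_0)$ times the block-$i$ column indexed by $v$ to the block-$i$ column indexed by $v_0$; the edge rows are now zero in those columns, so this only affects the $M_i$-rows and, again by direct computation, clears their $v_0$-column entries.

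After these operations the transformed matrix is block triangular. One block consists of the $M_i$-rows restricted to the block-$i$ columns indexed by $V \setminus \{v_0\}$; this is the direct sum over $i$ of $\frac{1}{p_i(v_0)}$ times an identity matrix, contributing rank exactly $k(|V|-1)$. The complementary block consists of the edge rows restricted to the $\tilde p$-columns together with the $k$ distinguished $v_0$-columns: this is precisely $[R(G,\tilde p) \mid \tfrac{1}{p_{d-k+1}(v_0)} f_{G,1}(p_{d-k+1}) \mid \cdots \mid \tfrac{1}{p_d(v_0)} f_{G,1}(p_d)]$, which after rescaling the $k$ distinguished columns (permissible since $p_i(v_0)\neq 0$) is $DR_k(G,p)$. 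Summing the two ranks yields the identity.

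The main obstacle is the bookkeeping in the first row-operation step: one must simultaneously track what the subtractions do in the columns labelled by $v$, $w$, and $v_0$ of block $i$, and verify the clean cancellation giving the $(p_i(v)-p_i(w))^2/p_i(v_0)$ residue regardless of whether the edge is incident to $v_0$. This is routine but is where the hypothesis $p_i(v_0)\neq 0$ is essential, both to licence the row scaling and to make the final column rescaling non-degenerate.
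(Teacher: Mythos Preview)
Your proposal is correct and follows essentially the same approach as the paper's proof: both transform $J_{v_0}(G,p)$ by elementary row and column operations into a block-triangular form whose diagonal blocks are (a rescaling of) $DR_k(G,p)$ and a direct sum of identity-type matrices, yielding the rank identity $\rank J_{v_0}(G,p) = \rank DR_k(G,p) + k(|V|-1)$. The only differences are cosmetic---the paper pre-scales the $M_i$-rows by $p_i(v_0)$ before the row reductions and stops at a block-triangular form, whereas you fold the scaling into the row-operation coefficients and perform the extra column operations to reach a block-diagonal form.
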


\begin{proof}
	We may suppose that $G$ is not a complete graph.	
	By multiplying each row of $J_{v_0}(G,p)$ corresponding to the vertex $v \in V \setminus \{v_0\}$ and coordinate $i \in \{d-k+1, \ldots, d\}$ by $p_i(v_0)$,
	we obtain the matrix
	\begin{align*}
	    J' =
	    \begin{bmatrix}
	        R(G,\tilde{p}) & R(G,p_{d-k+1}) & \cdots & R(G,p_{d}) \\
	        \mathbf{0} & M'_{d-k+1} &   & \mathbf{0} \\
	        \vdots &       &  \ddots    &      \\
	        \mathbf{0} & \mathbf{0}  &  & M'_d
	    \end{bmatrix},
	\end{align*}
	where
	\begin{align*}
	    M'_i(v,w) =
	    \begin{cases}
	        1 &\text{if $w = v$},\\
	        -p_i(v)/p_i(v_0) &\text{if $w = v_0$},\\
	        0 &\text{otherwise}.
	    \end{cases}
	\end{align*}
	Let $J'_{vw}$ be the row corresponding to the edge $vw \in E$,
	and let $J'_{v,i}$ be the row corresponding to the vertex $v \in V \setminus \{v_0\}$ and coordinate $i \in \{d-k+1, \ldots, d\}$.
	We will now form a new matrix from $J'$ by the following row operations:
	\begin{itemize}
	    \item For each row $vw$ with $v \neq 0$ and $w \neq v_0$,
	    $J'_{vw} \mapsto J'_{vw} - \sum_{i=d-k+1}^d (p_i(v) - p_i(w)) (J'_{v,i} - J'_{w,i})$.
	    \item For each row $v v_0$,
	    $J'_{vv_0} \mapsto J'_{v v_0} - \sum_{i=d-k+1}^d (p_i(v) - p_i(v_0)) J'_{v,i}$.
	    \item Shift each column corresponding to the vertex $v_0$ and coordinate $i \in \{d-k+1, \ldots, d\}$ to the right and multiply by $p_i(v_0)$.
	\end{itemize}
	With this we obtain the following matrix:
	\begin{align*}
	    J'' =
	    \begin{bmatrix}
	        R(G,\tilde{p}) & \mathbf{0} & \cdots & \mathbf{0} & f_{G,1}(p_{d-k+1}) & \cdots & f_{G,1}(p_d) \\
	        \mathbf{0} & I_{| V \setminus \{v_0\}|} &   & \mathbf{0} & b_{d-k+1} &   & \mathbf{0} \\
	        \vdots &       &  \ddots    &     &       &  \ddots    &       \\
	        \mathbf{0} & \mathbf{0}  &  & I_{| V \setminus \{v_0\}|} & \mathbf{0}  &  & b_d
	    \end{bmatrix},
	\end{align*}
	where $I_{| V \setminus \{v_0\}|}$ is the $| V \setminus \{v_0\}| \times | V \setminus \{v_0\}|$ identity matrix, 
	and each $b_i$ is the $| V \setminus \{v_0\}|$-dimensional column vector with coordinates $b_i(v) := -p_i(v)$.
	We now note that 
	\begin{align*}
		\rank DR_k(G,p) = \rank J'' - k(|V|-1) = \rank J_{v_0}(G,p) - k|V| +k
	\end{align*}
	which gives the desired equality.
\end{proof}

We next give a complete description of $(d,k)$-rigidity for arbitrary pairs $d,k$ in terms of the usual bar-joint rigidity. The characterisation leads to efficient combinatorial algorithms whenever the resulting bar-joint rigidity problem can be solved in such terms; that is, when $d-k\leq 2$. 

We will use an inductive argument to prove the following theorem. To this end it is convenient here, and only here, to allow $k=0$ in our definitions. In this case, $(d,0)$-rigidity is precisely $d$-rigidity.

\begin{theorem}\label{t:dkrigidgraph}
    A graph $G=(V,E)$ is $(d,k)$-rigid if and only if either $G$ is a complete graph, or $G$ contains a spanning $(d-k)$-rigid subgraph and $|E| \geq (d-k)|V| - \binom{d-k+1}{2} + k$.
\end{theorem}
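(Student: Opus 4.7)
The plan is to induct on $k \geq 0$, using that the paper has explicitly allowed $k=0$ for this inductive purpose. The base case $k = 0$ is just ordinary $d$-rigidity: the condition ``$G$ contains a spanning $d$-rigid subgraph'' is tautologically satisfied by $G$ itself, and ``$|E| \geq d|V| - \binom{d+1}{2}$'' is exactly Maxwell's necessary condition for a non-complete $(d,0)$-rigid graph, so the equivalence is \Cref{thm:AR} together with Maxwell's lemma. The key observation driving the inductive step is the block relation
\begin{align*}
    DR_k(G,p) = \big[\, DR_{k-1}(G, p'') \,\big|\, f_{G,1}(p_d) \,\big],
\end{align*}
where $p'' = (p_1, \ldots, p_{d-1})$: since the first $d-k$ coordinates are shared, both matrices re-use the same $R(G, \tilde p)$, and the threshold rank appearing in \Cref{t:matrixrank} for $(d,k)$-rigidity is one more than the one for $(d-1,k-1)$-rigidity.

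For the forward direction I would assume $G$ is $(d,k)$-rigid and not complete, take a generic $p$, and apply \Cref{t:matrixrank} to get $\rank DR_k(G,p) = (d-k)|V| - \binom{d-k+1}{2} + k$. Deleting the last column gives $\rank DR_{k-1}(G, p'') \geq (d-k)|V| - \binom{d-k+1}{2} + (k-1)$. The reverse inequality always holds because \Cref{thm:AR} caps $\rank R(G, \tilde p)$ at $(d-k)|V|-\binom{d-k+1}{2}$, so we get equality; by the inductive hypothesis $G$ is therefore $(d-1,k-1)$-rigid and hence contains a spanning $(d-k)$-rigid subgraph, while $|E| \geq \rank DR_k(G,p)$ gives the edge-count inequality for free.

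For the backward direction the hypothesis $|E| \geq (d-k)|V| - \binom{d-k+1}{2} + k$ trivially implies the corresponding bound for $(d-1,k-1)$-rigidity, so combined with the spanning $(d-k)$-rigid subgraph the inductive hypothesis yields $\rank DR_{k-1}(G,p'') = (d-k)|V| - \binom{d-k+1}{2} + (k-1)$ generically. It remains to show that appending $f_{G,1}(p_d)$ increases the rank by exactly one. The hard step — and the main obstacle — is ensuring this rank jump holds generically. The approach is: since $|E|$ strictly exceeds $\rank DR_{k-1}(G,p'')$, the left null space of $DR_{k-1}(G,p'')$ contains a nonzero vector $\omega$, and I would examine
\begin{align*}
    \omega^T f_{G,1}(p_d) = \sum_{vw \in E} \omega_{vw} \big(p_d(v) - p_d(w)\big)^2 .
\end{align*}
This is a quadratic form in $p_d \in \mathbb{R}^V$ whose Gram matrix is (twice) the stress matrix $\Omega(\omega)$ from the preliminaries; because the off-diagonal entries of $\Omega(\omega)$ literally record $-\omega_{vw}$, the matrix $\Omega(\omega)$ is nonzero as soon as $\omega \neq 0$. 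Hence $\omega^T f_{G,1}(p_d)$ is a nontrivial polynomial in $p_d$, and by algebraic independence of $p_d$ from $p''$ at a generic $p$, it does not vanish at our chosen $p$. Therefore $f_{G,1}(p_d)$ lies outside the column span of $DR_{k-1}(G,p'')$, the rank increases by one to the required value, and \Cref{t:matrixrank} certifies $(d,k)$-rigidity.
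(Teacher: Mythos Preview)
Your proof is correct and rests on the same core idea as the paper's: the left kernel of the smaller matrix contains a nonzero $\omega$, the associated stress matrix $\Omega(\omega)$ is nonzero, and therefore the quadratic form $z \mapsto z^T\Omega(\omega)z$ is nonvanishing, which forces the extra column $f_{G,1}(p_d)$ to raise the rank by one. The organization differs: the paper first strips $G$ down to a minimally $(d-k)$-rigid spanning subgraph $H$ and then inducts on the number $j$ of edges added back, so that at each step the left kernel is one-dimensional and $\sigma$ is essentially unique; it then \emph{constructs} a realisation by setting the new coordinate equal to a $z$ with $z^T\Omega(\sigma)z \neq 0$, appealing only to the fact that maximal rank is Zariski-open. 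You instead keep $G$ fixed and induct on $k$, which is arguably the more natural parameter and avoids the preliminary reduction to a minimal subgraph.

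One point to tighten: your sentence ``by algebraic independence of $p_d$ from $p''$ \ldots\ it does not vanish at our chosen $p$'' implicitly assumes the coefficients of the quadratic form lie in a field over which $p_d$ is transcendental. This is fine once you note that the left kernel of $DR_{k-1}(G,p'')$ is defined over $\mathbb{Q}(p'')$ and hence $\omega$ can be chosen with entries in $\mathbb{Q}(p'')$; alternatively, you can mimic the paper and simply exhibit \emph{some} $z$ with $z^T\Omega(\omega)z\neq 0$, which already gives a realisation achieving the target rank. Either route closes the argument.
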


\begin{proof}
	We may suppose that $G$ is not a complete graph.
	If $|E| < (d-k)|V| - \binom{d-k+1}{2} + k$,
	then $G$ is not $(d,k)$-rigid by \Cref{t:matrixrank}. 
	If $G$ does not contain a spanning $(d-k)$-rigid subgraph,
	then for every realisation $p :V \rightarrow \mathbb{R}^d$,
	the matrix $R(G,\tilde{p})$ has a rank strictly less than $(d-k)|V| - \binom{d-k+1}{2}$.
	Since $DR_k(G,p)$ is formed from $R(G, \tilde{p})$ by adding $k$ columns,
	it then must have a rank strictly less than $(d-k)|V| - \binom{d-k+1}{2} +k$.
    Hence $G$ is not $(d,k)$-rigid by \Cref{t:matrixrank}.

    Now suppose that $G$ contains a spanning $(d-k)$-rigid subgraph and $|E| \geq (d-k)|V| - \binom{d-k+1}{2} + k$.
	By deleting edges if necessary,
	we may suppose that $G = H + \{e_1, \ldots, e_k\}$,
	where $H=(V,F)$ is minimally $(d-k)$-rigid and $e_1, \ldots, e_k$ are edges in $E \setminus F$.
	Define $E_0 = F$, $E_i := F +\{e_1,\ldots,e_i\}$, $G_0 := (V,E_0)$ and $G_i := (V,E_i)$ for all $1\leq i \leq k$.
	It is immediate that $G_0 = H$ is minimally $(d-k,0)$-rigid.
	By an inductive argument,
	suppose that $G_j$ is minimally $(d-k+j,j)$-rigid for some $j \in \{0,\ldots, k-1\}$.
	By our inductive argument and \Cref{t:matrixrank},
	there exists a realisation $p : V \rightarrow \mathbb{R}^{d-k+j}$ such that
    \begin{equation*}
        |E_j| = (d-k)|V| - \binom{d-k+1}{2} + j.
    \end{equation*}
	By maximality,
	we have that $\rank DR_j(G_{j+1},p) =  \rank DR_j(G_j,p)$,
	hence there exists a unique (up to scalar multiple) element of the left kernel of $\rank DR_j(G_{j+1},p)$.
    This is equivalent to their existing a unique (up to scalar multiple) non-zero vector $\sigma : E_{j+1} \rightarrow \mathbb{R}$ where $\sigma^T R(G,\tilde{p}) = [0 ~ \ldots ~ 0]$ and 
	\begin{equation}\label{e:dkrigidgraph}
		\sum_{vw \in E_{j+1}} \sigma(vw) (p_i(v) - p_i(w))^2 = p_i^T \Omega(\sigma) p_i = 0
	\end{equation}
	for each $i \in \{d-k+1, \ldots, d-k+j\}$,
	where $\Omega(\sigma)$ is the stress matrix corresponding to $\sigma$.
	Since $\sigma$ is non-zero,
	there exists $z \in \mathbb{R}^V$ such that $z^T\Omega(\sigma) z \neq 0$.
	Fix $p' :V \rightarrow \mathbb{R}^{d-k+j+1}$ to be the realisation where $p'_i := p_i$ for all $i \in \{1, \ldots, d-k+j\}$,
	and $p'_{d-k+j+1} := z$.
	As the left kernel of $DR_{j+1}(G_{j+1},p')$ is contained in the left kernel of $DR_{j}(G_{j+1},p)$,
	we have $\ker DR_{j+1}(G_{j+1},p')^T = \{0\}$.
	By counting edges we see that $|E| = (d-k)|V| - \binom{d-k+1}{2} +j+1$,	
	hence $G_{j+1}$ is minimally $(d-k+j+1,j+1)$-rigid.
	By induction it now follows that $G = G_k$ is minimally $(d,k)$-rigid.
\end{proof}

\begin{example}
Let us unpack \Cref{t:dkrigidgraph} for some basic special cases.
Combining with the folklore 1-dimensional characterisation of rigidity, the theorem implies that a graph is $(2,1)$-rigid if and only if either it is complete on $1$ or $2$ vertices or it is a connected graph with at least one cycle.
Similarly, a graph $G=(V,E)$ is $(3,2)$-rigid if and only if either $G$ is complete on at most 3 vertices or $G$ is connected with $|E|\geq |V|+1$.

Next suppose the gap between $d$ and $k$ is 2. Then we can use Laman's theorem \cite{laman,PG} to deduce that a graph is $(3,1)$-rigid if and only if $G$ is complete or it is a Laman-plus-one graph (that is, it is obtained from a Laman graph\footnote{$G=(V,E)$ is a Laman graph if $|E|=2|V|-3$ and every edge-induced subgraph $(V',E')$ has $|E'|\leq 2|V'|-3$.} by adding exactly one edge). 

When the gap is bigger than 2 we no longer have a combinatorial description of rigidity to rely upon. For example, a graph $G=(V,E)$ is $(4,1)$-rigid if and only if either $G$ is complete or $G$ contains a spanning 3-rigid subgraph and $|E|\geq 3|V|-5$. Nevertheless some $d$-rigidity is understood in some special cases which we can then apply. For example if $G$ is obtained from a triangulation of the sphere by adding some edges then, combining our result with a theorem of Gluck \cite{Glu}, we have that $G$ is $(4,1)$-rigid.
\end{example}

\begin{remark}
A ($d$-dimensional) \emph{0-extension} adds a vertex of degree $d$ to a graph. A ($d$-dimensional) \emph{1-extension} deletes an edge $xy$ and adds a vertex $v$ of degree $d+1$ incident to $x$ and $y$. It is well known that these operations preserve the rigidity of bar-joint frameworks \cite{Wlong}.
We note that it is possible to extend the standard ($(d-k)$-dimensional) 0- and 1-extension arguments to show that $(d,k)$-rigidity is preserved by these operations. This gives a way to construct large families of $(d,k)$-rigid graphs. It also gives a combinatorial proof of an interesting special case of \Cref{t:dkrigidgraph} when $d=3$ and $k=1$. Here one may use Polaczek-Geiringer's \cite{PG} characterisation of 2-rigidity to see that the characterisation of minimal $(3,1)$-rigidity in \Cref{t:dkrigidgraph} is equivalent to the graph being a Laman-plus-one graph. Hence to prove such graphs are $(3,1)$-rigid we simply apply a well known recursive construction of Laman-plus-one graphs due to Haas et al \cite{Haas}.
\end{remark}

The $(d,k)$-rigidity matroid $\mathcal{R}_{d,k}(G,p)$ of a framework $(G,p)$ is the row matroid of the Jacobean matrix $J_{v_0}(G,p)$. If $(G,p)$ and $(G,q)$ are generic frameworks in $\mathbb{R}^d$, then it is easy to see that their matroids coincide, that is the $(d,k)$-rigidity matroid depends on $d,k$ and $G$ but, for generic frameworks, it does not depend on the choice of generic realisation. Hence we drop the $p$ and use $\mathcal{R}_{d,k}(G)$ or even $\mathcal{R}_{d,k}$ when the context is clear. We will also use $\mathcal{R}_{d}$ for the usual $d$-dimensional rigidity matroid of a graph and $U_k$ for the uniform matroid of rank $k$.

Let $M_1$ and $M_2$ be two matroids with common ground set $E$. Then the \emph{matroid union} $M_1 \vee M_2$ is the matroid on $E$ with the property that a subset $F$ is independent in $M_1 \vee M_2$ if and only if it has the form $F=F_1\cup F_2$, where $F_i$ is independent in $M_i$ for $i=1,2$.

\begin{corollary}\label{cor:union}
We have $\mathcal{R}_{d,k}= \mathcal{R}_{d-k} \vee U_k$.
\end{corollary}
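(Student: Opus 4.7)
My plan is to prove the identity by showing the rank functions of both matroids agree on every $F \subseteq E(G)$; here we identify $\mathcal{R}_{d,k}(G)$ with its restriction to the edges of $G$, so that it is represented by $DR_k(G,p)$ rather than $J_{v_0}(G,p)$ and all three matroids share the ground set $E$. For the right-hand side, I would first record the standard matroid union rank formula in this special case: because independent sets of $M \vee U_k$ split as $I_1 \cup I_2$ with $I_1$ independent in $M$ and $|I_2| \le k$, we have
\[
r_{M \vee U_k}(F) = \min\bigl(|F|,\; r_M(F) + k\bigr).
\]
It thus suffices to prove that for a generic realisation $p$,
\[
\rank_{\mathcal{R}_{d,k}}(F) = \min\bigl(|F|,\; \rank_{\mathcal{R}_{d-k}}(F) + k\bigr) \qquad \text{for every } F \subseteq E.
\]

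By \Cref{t:matrixrank} applied to the subgraph $(V,F)$, the left-hand side equals the rank of the $F$-rows of $DR_k(G,p)$, while the $F$-rows of $DR_0(G,p) = R(G,\tilde p)$ have rank $\rank_{\mathcal{R}_{d-k}}(F)$. Passing from $DR_j(G,p)$ to $DR_{j+1}(G,p)$ appends exactly one column, $f_{G,1}(p_{d-k+j+1})$. I would prove by induction on $j = 0, 1, \ldots, k-1$ the one-step rank increment
\[
\rank\!\bigl(DR_{j+1}(G,p)\big|_F\bigr) = \min\!\Bigl(|F|,\; \rank\!\bigl(DR_j(G,p)\big|_F\bigr) + 1\Bigr),
\]
from which the desired equality follows by iterating and combining with the base case.

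The content of the one-step claim is a stress-space argument of the same flavour as the inductive step in the proof of \Cref{t:dkrigidgraph}. Let $S^{(j)} \subseteq \mathbb{R}^F$ denote the left kernel of $DR_j(G,p)|_F$. Appending the new column imposes the single linear condition $p_{d-k+j+1}^T \Omega(\sigma) p_{d-k+j+1} = 0$, where $\Omega(\sigma)$ is the stress matrix of the weighted graph $(V,F,\sigma)$, so $S^{(j+1)}$ is cut out of $S^{(j)}$ by one linear equation. If $S^{(j)} = \{0\}$ there is nothing to show. Otherwise, pick any nonzero $\sigma \in S^{(j)}$: since some $\sigma_{vw} \neq 0$, the symmetric matrix $\Omega(\sigma)$ is nonzero, so $z \mapsto z^T \Omega(\sigma) z$ is a nonzero polynomial in $z$. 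Because $\sigma$ depends only on the coordinates $p_1, \ldots, p_{d-k+j}$ while $p_{d-k+j+1}$ is algebraically independent of these for generic $p$, we obtain $p_{d-k+j+1}^T \Omega(\sigma) p_{d-k+j+1} \neq 0$. Hence the new linear equation is non-trivial on $S^{(j)}$, giving $\dim S^{(j+1)} = \dim S^{(j)} - 1$ and the claimed rank increment.

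The one genuine subtlety, rather than an obstacle, is that a single generic $p \in \mathbb{R}^d$ must satisfy all the required non-vanishings simultaneously across every $F \subseteq E$ and every step $j \in \{0, \ldots, k-1\}$; but there are only finitely many polynomial conditions (one for each pair $(F,j)$ for which the relevant stress space is nonzero), so a generic $p$ avoids them all. Combining the iterated rank identity with the matroid union rank formula yields $\rank_{\mathcal{R}_{d,k}}(F) = \rank_{\mathcal{R}_{d-k} \vee U_k}(F)$ for all $F \subseteq E$, and the corollary follows.
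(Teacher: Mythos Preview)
Your argument is correct, and the approach is genuinely different from the paper's. The paper proves the corollary in two lines by invoking \Cref{t:dkrigidgraph} as a black box: a set $F\subseteq E$ is a basis of $\mathcal{R}_{d,k}$ if and only if the subgraph $(V,F)$ is minimally $(d,k)$-rigid, and by \Cref{t:dkrigidgraph} this happens exactly when $F$ splits as a minimally $(d-k)$-rigid spanning subgraph plus $k$ further edges, i.e.\ when $F$ is a basis of $\mathcal{R}_{d-k}\vee U_k$. Your proof instead verifies the rank-function identity $r_{\mathcal{R}_{d,k}}(F)=\min(|F|,r_{\mathcal{R}_{d-k}}(F)+k)$ on every subset directly, via the one-column stress increment. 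This is essentially the inductive engine inside the proof of \Cref{t:dkrigidgraph}, applied not just to the whole edge set but uniformly to every $F$; so you have reproved the relevant part of that theorem rather than quoted it. The paper's route is shorter; yours is more self-contained and makes the link between the $DR_j$ matrices and the uniform-matroid extension completely explicit.

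Two small points. First, when you say you ``restrict'' $\mathcal{R}_{d,k}(G)$ to the edges, the matroid you actually want (and then correctly use) is the one represented by $DR_k(G,p)$; in matroid language that is the \emph{contraction} of the row matroid of $J_{v_0}(G,p)$ by the dilation rows, not the restriction (the restriction to $E$ is just $\mathcal{R}_d$). Second, the step ``$\sigma$ depends only on $p_1,\ldots,p_{d-k+j}$, so genericity of $p_{d-k+j+1}$ forces $p_{d-k+j+1}^T\Omega(\sigma)p_{d-k+j+1}\neq 0$'' needs $\sigma$ to be chosen as a rational function of those coordinates (e.g.\ via cofactors of a maximal minor of $DR_j(G,p)|_F$); otherwise the vanishing locus of the quadratic form need not be $\mathbb{Q}$-defined. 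Alternatively, and more in the spirit of the paper, you can avoid this by exhibiting one realisation with the desired rank and then invoking upper semicontinuity: fix generic $p_1,\ldots,p_{d-k+j}$, pick any nonzero $\sigma\in S^{(j)}$, choose $z$ with $z^T\Omega(\sigma)z\neq 0$, and set $p_{d-k+j+1}:=z$. Either fix closes the gap.
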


\begin{proof}
It suffices to show that $F$ is a basis in $\mathcal{R}_{d,k}$ if and only if we can partition $F$ into sets $F_1$ and $F_2$ where $F_1$ is a basis of $\mathcal{R}_{d-k}$ and $F_2$ is a basis of $U_k$ (equivalently $F_2$ has size $k$). This follows from \Cref{t:dkrigidgraph}, since $F$ is a basis of $\mathcal{R}_{d,k}$ if and only if the graph induced by $F$ is minimally $(d,k)$-rigid and $F_1$ is a basis of $\mathcal{R}_{d-k}$ if and only if the graph induced by $F_1$ is minimally $(d-k)$-rigid.
\end{proof}

It follows from a well known algorithm of Edmond's \cite{Edm}, see for example a similar discussion in \cite[Section 5.1]{DK}, that generic $(d,k)$-rigidity can be checked efficiently whenever $(d-k)$-rigidity can.

\begin{remark}
    There are types of graph rigidity where the associated rigidity matroid is the matroid union of a rigidity matroid and some other matroid.
    In \cite{capr},
    Cros et al. investigated the rigidity of unmanned aerial vehicles where there exists a time lag between distance information being sent between any two vehicles.
    They proved that the correct rigidity matroid in this setting (with all vehicles existing in $d$-dimensional space) was the matroid union of the $(d-1)$-dimensional rigidity matroid and the graphical matroid.
    This type of rigidity matroid is identical to that found by Dewar and Kitson in \cite{DK}. We also note that Schulze, Serocold and Theran \cite{SST} considered rigidity in the context where classes of edges can change in a specific coordinated way and also found a matroid union structure, there with the transversal matroid.
\end{remark}

\section{Global \texorpdfstring{\lowercase{$(d,k)$}}{\lowercase{(d,k)}}-rigidity}
\label{sec:global}

We next consider global $(d,k)$-rigidity.
As noted in the proof of \Cref{t:dkrigidgraph},
a map $\sigma : E \rightarrow \mathbb{R}$ with corresponding stress matrix $\Omega(\sigma)$ is an element of $\ker DR_k(G,p)^T$ if and only if it is an equilibrium stress of $(G,\tilde{p})$ and $p_i^T \Omega(\sigma) p_i =0$ for each $i \in \{d-k+1,\ldots, d\}$.

\begin{example}\label{ex:k4-e}
In \Cref{ex:c4} we showed that $C_4$ is minimally $(2,1)$-rigid. From this, it is not hard to then determine that $C_4+v_1v_3=K_4-v_2v_4$ is a circuit in $\mathcal{R}_{2,1}$. Hence, any generic realisation has a unique equilibrium stress which is non-zero on each edge. If we fix $p$ to be the realisation of $K_4-v_2v_4$ with $p(v_2)=(1,2)$, $p(v_3)=(6,8)$ and $p(v_4)=(16,12)$, we have the unique element $\sigma = \begin{bmatrix} 490 & 98 & -8 & 5 & -95 \end{bmatrix}^T \in \ker DR_1(K_4-v_2v_4,p)^T$ with corresponding stress matrix
$$\Omega(\sigma)=\begin{bmatrix} 
400 & -490 & 95 & -5 \\
-490 & 588 & -98 & 0 \\
95 & -98 & -5 & 8 \\
-5 & 0 & 8 & -3
\end{bmatrix},$$
In this case we have $\rank \Omega = 4-2+1-1=2$.
\end{example}

\subsection{A sufficient condition}

We next develop a sufficient condition for global $(d,k)$-rigidity (\Cref{t:dkgr1}).
We first prove the following results surrounding equilibrium stresses.

\begin{lemma}\label{l:samestresses}
	For a graph $G=(V,E)$ with fixed vertex $v_0$, choose vectors $\sigma  \in \mathbb{R}^E$ and $\lambda \in \mathbb{R}^{V\setminus\{v_0\}}$.
	For any integers $1 \leq k <d$,
	let $p:V \rightarrow \mathbb{R}^d$ be a realisation of $G$ where $p_i(v_0) \neq 0$ for all $i \in \{d-k+1,\ldots,d\}$.
	Then $(\sigma, \lambda) \in \ker J_{v_0}(G,p)^T$ if and only if $\sigma \in DR_k(G,p)$ and
	\begin{equation}\label{eq:lambda}
		\lambda(v) = -p_i(v_0)\left( \sum_{w \in N_G(v)} \sigma(vw) (p_i(v) - p_i(w)) \right)
	\end{equation}
	for each $i \in \{d-k+1,\ldots,d\}$.
\end{lemma}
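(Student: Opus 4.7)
The plan is to compute the left kernel of $J_{v_0}(G,p)$ column-by-column, exploiting the block structure of the matrix, and identify which conditions on $\sigma$ and $\lambda$ each block of columns imposes. Write $(\sigma,\lambda)$ where $\sigma$ is indexed by $E$ and $\lambda$ is indexed by pairs $(v,i)$ with $v\in V\setminus\{v_0\}$ and $i\in\{d-k+1,\ldots,d\}$; the vector $\lambda$ in \cref{eq:lambda} is then the component for a fixed $i$.

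First I would examine the columns indexed by $(v,j)$ with $j \in \{1,\ldots,d-k\}$. Here only the top block $R(G,\tilde p)$ contributes, so the kernel condition restricted to these columns is exactly $\sigma^T R(G,\tilde p) = 0$, i.e.\ $\sigma$ is an equilibrium stress of $(G,\tilde p)$. Next, for each $i \in \{d-k+1,\ldots,d\}$ and each $v \neq v_0$, the column $(v,i)$ receives $p_i(v)-p_i(w)$ from row $vw$ of $R(G,p_i)$ and $1/p_i(v_0)$ from the $M_i$-row indexed by $v$; setting the resulting sum to zero and multiplying by $p_i(v_0)$ rearranges precisely to \cref{eq:lambda}. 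Thus these two blocks of columns together are equivalent to $\sigma^T R(G,\tilde p)=0$ together with \cref{eq:lambda}.

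The remaining and main computation concerns the columns $(v_0, i)$ for $i \in \{d-k+1,\ldots,d\}$. Row $v_0 w$ of $R(G,p_i)$ contributes $p_i(v_0)-p_i(w)$, and each row $u \in V\setminus\{v_0\}$ of $M_i$ contributes $-p_i(u)/p_i(v_0)^2$. After substituting the expression for $\lambda(u)$ supplied by \cref{eq:lambda} and clearing a factor of $p_i(v_0)$, the column-$(v_0,i)$ equation collapses to
\begin{equation*}
\sum_{u \in V} p_i(u) \sum_{w \in N_G(u)} \sigma(uw)\bigl(p_i(u)-p_i(w)\bigr) = 0.
\end{equation*}
Pairing the two contributions of each edge via the identity $p_i(v)(p_i(v)-p_i(w)) + p_i(w)(p_i(w)-p_i(v)) = (p_i(v)-p_i(w))^2$ rewrites this sum as $\sum_{vw \in E} \sigma(vw)(p_i(v)-p_i(w))^2 = \sigma^T f_{G,1}(p_i)$. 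So, given \cref{eq:lambda}, the column-$(v_0,i)$ condition is precisely $\sigma^T f_{G,1}(p_i) = 0$.

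Combining the three cases, $(\sigma,\lambda) \in \ker J_{v_0}(G,p)^T$ if and only if $\sigma^T R(G,\tilde p)=0$, $\sigma^T f_{G,1}(p_i)=0$ for every $i \in \{d-k+1,\ldots,d\}$, and $\lambda$ satisfies \cref{eq:lambda}. Since the first two conditions together are exactly $\sigma \in \ker DR_k(G,p)^T$, this proves the lemma. The only non-routine step is the collapse at the $v_0$-columns, but it is forced: \cref{eq:lambda} was derived from the $(v,i)$ columns with $v\neq v_0$, and the $v_0$-columns necessarily encode the single remaining scalar consistency check, which the above identity identifies with the vanishing of $\sigma^T f_{G,1}(p_i)$.
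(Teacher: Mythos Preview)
Your proof is correct and follows essentially the same approach as the paper's own proof: both compute the entries of $J_{v_0}(G,p)^T(\sigma,\lambda)$ column-by-column according to the block structure, obtain the equilibrium stress condition from the first $d-k$ coordinate blocks, obtain \cref{eq:lambda} from the $(v,i)$ columns with $v\neq v_0$, and then substitute \cref{eq:lambda} into the $(v_0,i)$ column to reduce it to the vanishing of $\sigma^T f_{G,1}(p_i)$. The only cosmetic difference is that the paper phrases the final quantity as the quadratic form $p_i^T\Omega(\sigma)p_i/p_i(v_0)$, whereas you reach the equivalent $\sigma^T f_{G,1}(p_i)$ directly via the edge identity $p_i(v)(p_i(v)-p_i(w))+p_i(w)(p_i(w)-p_i(v))=(p_i(v)-p_i(w))^2$.
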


\begin{proof}
	We first note that in either direction of the implication we require that $\sigma$ is an equilibrium stress of $(G,\tilde{p})$,
    so we may suppose that this is so throughout the proof.
	Label the entry of the vector $J_{v_0}(G,p)^T(\sigma, \lambda)$ that corresponds to a vertex $v \in V$ and coordinate $i \in \{1,\ldots, d\}$ by $a(v,i)$.
    As $\sigma$ is an equilibrium stress of $(G,\tilde{p})$,
    it follows that for every $i \in \{1,\ldots,d-k\}$ and every for every $v \in V$ we have
    \begin{equation*}
        a(v,i) = \sum_{w \in N_G(v)} \sigma(vw) (p_i(v)-p_i(w)) = 0.
    \end{equation*}
    Fix $i \in \{d-k+1,\ldots, d\}$.
    If $v \neq v_0$ then
	\begin{equation*}
		a(v,i) = \frac{\lambda(v)}{p_i(v_0)} + \sum_{w \in N_G(v)} \sigma(vw) (p_i(v)-p_i(w))
    \end{equation*}
    and $a(v,i) = 0$ if and only if \cref{eq:lambda} holds.
	Given $\Omega(\sigma)$ is the corresponding stress matrix for $\sigma$,
	we have that
	\begin{align*}
		a(v_0,i) &= \sum_{w \in N_G(v_0)} \sigma(v_0w) (p_i(v_0)-p_i(w)) - \sum_{v \neq v_0} \frac{\lambda(v)p_i(v)}{p_i(v_0)^2} \\
		&=\frac{1}{p_i(v_0)} \left( \sum_{v \in V} \sum_{w \in N_G(v)} \sigma(v_0w) p_i(v) (p_i(v)-p_i(w)) \right) \\
		&=\frac{p_i^T \Omega(\sigma) p_i}{p_i(v_0)}.
	\end{align*}
    Hence, $a(v_0,i) = 0$ if and only if $\sigma^T f_{G,1}(p_i)$.
	The result follows from combining the above observations.
\end{proof}

\begin{lemma}\label{l:genstresses}
	Let $(G,p)$ be a generic $(d,k)$-rigid framework in $\mathbb{R}^d$ and let $(G,q)$ be a $(d,k)$-equivalent framework.
	Then $\ker DR_k(G,p)^T =\ker DR_k(G,q)^T$ and $\ker J_{v_0}(G,p)^T =\ker J_{v_0}(G,q)^T$ for any choice of $v_0 \in V$.
	Furthermore, if $\sigma \in \ker DR_k(G,p)^T$, $i \in \{d-k+1,\ldots, d\}$ and $q_i = \alpha_i p_i$,
	then
	\begin{equation}\label{e:genstresses}
		(1-\alpha_i^2)\left( \sum_{w \in N_G(v)} \sigma(vw) (p_i(v) - p_i(w)) \right) =0.
	\end{equation}
\end{lemma}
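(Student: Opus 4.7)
My plan is to establish the three assertions in sequence.

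For the first equality $\ker DR_k(G,p)^T = \ker DR_k(G,q)^T$, I fix $\sigma$ in the kernel at $p$ and first verify the easy direction: $\sigma^T f_{G,1}(q_i) = \alpha_i^2 \sigma^T f_{G,1}(p_i) = 0$, since $q_i = \alpha_i p_i$ and $\sigma$ already kills $f_{G,1}(p_i)$. The substantive step is to show $\sigma^T R(G,\tilde q) = 0$. My approach is to factor the per-edge equivalence as $(p-q)_{vw} \cdot (p+q)_{vw} = 0$, split over the first $d-k$ and the last $k$ coordinates using the dilation $q_i = \alpha_i p_i$, then multiply by $\sigma_{vw}$ and sum over all edges. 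The conditions $\sigma^T R(G,\tilde p) = 0$ and $\sigma^T f_{G,1}(p_i) = 0$ collapse the cross-terms and leave a scalar identity $\sum_{j=1}^{d-k} (\tilde q_j - \tilde p_j)^T \Omega(\sigma)(\tilde q_j - \tilde p_j) = 0$. To promote this scalar identity to the required vector identity $\Omega(\sigma)\tilde q_j = 0$ for each $j$, I invoke generic $(d,k)$-rigidity: since $\rank DR_k(G,p)$ is maximal, a dimension count together with the algebraic structure of the $(d,k)$-equivalence locus forces $\rank DR_k(G,q)$ to be equally large, so that the cokernel dimensions at $p$ and $q$ coincide and the established inclusion becomes an equality. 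This is the technical heart of the proof.

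The second kernel equality $\ker J_{v_0}(G,p)^T = \ker J_{v_0}(G,q)^T$ then follows directly from the first via Lemma \ref{l:samestresses}, which parametrises elements of $\ker J_{v_0}(G,\cdot)^T$ by elements of $\ker DR_k(G,\cdot)^T$ together with the explicit formula \eqref{eq:lambda} for $\lambda$ in terms of $\sigma$ and the coordinate functions. One checks directly that substituting $q_i = \alpha_i p_i$ into this formula reproduces the $\lambda$ component obtained at $p$, completing the identification.

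For the ``furthermore'' claim, I multiply the per-edge equivalence by $\sigma_{vw}$ and sum only over $w \in N_G(v)$ for a fixed vertex $v$, rather than over all edges. Using the algebraic identity $\sum_{w \in N_G(v)}\sigma_{vw}(f(v)-f(w))^2 = 2f(v)(\Omega(\sigma) f)_v - (\Omega(\sigma) f^2)_v$ valid for any scalar $f:V\to\mathbb{R}$, together with the equilibrium conditions $\Omega(\sigma)\tilde p_j = 0 = \Omega(\sigma)\tilde q_j$ established in the first part, a careful computation yields an identity of the shape
\begin{equation*}
(\Omega(\sigma) h)_v = 2\sum_{i=d-k+1}^d (1-\alpha_i^2)\, p_i(v)\,(\Omega(\sigma) p_i)_v,
\end{equation*}
where $h(u) := \|p(u)\|^2 - \|q(u)\|^2$. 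Combined with genericity of $p$, which via algebraic independence of the $p_i(v)$ allows one to separate the contributions of different coordinates $i$, this identity yields the desired per-vertex conclusion $(1-\alpha_i^2)\sum_{w \in N_G(v)}\sigma(vw)(p_i(v)-p_i(w)) = 0$. The principal obstacle throughout is bridging the gap between scalar identities (easy consequences of summed equivalences) and the per-vertex, per-coordinate identities required, and this is exactly where the combination of genericity and maximal rank from the $(d,k)$-rigidity hypothesis does its essential work.
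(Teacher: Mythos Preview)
Your approach diverges substantially from the paper's, and it contains a genuine gap. The paper applies Connelly's proposition (stated there as \Cref{prop:bob}) directly to the map $F$ whose Jacobian is $J_{v_0}(G,\cdot)$: the local diffeomorphism $g$ with $F\circ g=F$ and $g(q)=p$ yields $J_{v_0}(G,q)=J_{v_0}(G,p)L$ for an invertible $L$, whence $\ker J_{v_0}(G,p)^T=\ker J_{v_0}(G,q)^T$ immediately. The $DR_k$ kernel equality then follows from \Cref{l:samestresses}, and the ``furthermore'' clause is obtained simply by equating the two values of $\lambda(v)$ given by \cref{eq:lambda} at $p$ and at $q$, substituting $q_i=\alpha_i p_i$, and rearranging.

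Your attempt to bypass Connelly's proposition does not go through. The scalar identity you obtain, equivalent to $\sum_{j=1}^{d-k}\tilde q_j^{\,T}\Omega(\sigma)\tilde q_j=0$, does \emph{not} imply $\Omega(\sigma)\tilde q_j=0$ for each $j$, because $\Omega(\sigma)$ is in general indefinite. You try to repair this with a rank/dimension argument, but (i) the claim that $\rank DR_k(G,q)=\rank DR_k(G,p)$ is exactly what needs justification: $q$ is not generic, and showing that a framework equivalent to a regular one has a Jacobian of the same rank is precisely the content of Connelly's proposition; and (ii) even granting equal ranks, you have not established the inclusion $\ker DR_k(G,p)^T\subseteq\ker DR_k(G,q)^T$---you verified only the $f_{G,1}(q_i)$ columns, not the $R(G,\tilde q)$ block---so ``the established inclusion becomes an equality'' has no inclusion to rest on. Your argument for the final identity inherits the same defect (it uses $\Omega(\sigma)\tilde q_j=0$), and the closing appeal to algebraic independence of the $p_i(v)$ to separate coordinates is also doubtful, since both $(\Omega(\sigma)h)_v$ and the scalars $\alpha_i$ are themselves algebraic in the coordinates of $p$.
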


To prove the lemma we require a proposition from Connelly \cite{C2005}.

\begin{proposition}\label{prop:bob}
Suppose that $f : \mathbb{R}^a \rightarrow \mathbb{R}^b$ is a function, where each coordinate is a rational function\footnote{This extension from Connelly's statement for polynomial functions can be proved with the same technique as in \cite{C2005}.} with integer coefficients, $p \in \mathbb{R}^a$ is generic with $f(p)$ well-defined, and $f(p) = f(q)$, for some $q \in \mathbb{R}^a$. Then there are (open) neighbourhoods $N_p$ of $p$ and $N_q$ of $q$ in $\mathbb{R}^a$ and a diffeomorphism $g:N_q \rightarrow N_p$ such that for all $x\in N_q$, $f(g(x))=f(x)$, and $g(q)=p$. 
\end{proposition}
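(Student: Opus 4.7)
The plan is to prove Proposition \ref{prop:bob} in three movements: fix the generic rank, propagate it to $q$, and then build $g$ via the constant rank theorem.

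First I would set $r := \max_{x} \rank Df_x$, where $x$ ranges over the (open) locus on which all coordinates of $f$ are well-defined. The set $W := \{x : \rank Df_x < r\}$ is cut out by simultaneous vanishing of the numerators of all $r \times r$ minors of $Df$, which are polynomials with integer coefficients; thus $W$ is a proper real-algebraic subset defined over $\mathbb{Q}$. Since $p$ is generic, $p \notin W$, and hence $\rank Df_p = r$.

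The crux of the argument, and the main obstacle, is showing that $\rank Df_q = r$ even though $q$ is not assumed generic. I would do this via a transcendence-degree comparison. The image $f(W)$ is a $\mathbb{Q}$-semi-algebraic set (Tarski--Seidenberg), and its dimension is at most $r-1$: stratifying $W$ into smooth pieces, the rank of $f$ restricted to any such piece is bounded above by the rank of $Df$ on $W$, which is at most $r-1$. Consequently every point of $f(W)$ has transcendence degree at most $r-1$ over $\mathbb{Q}$. On the other hand, because $p_1, \ldots, p_a$ are algebraically independent over $\mathbb{Q}$, evaluation at $p$ is injective on $\mathbb{Q}(x_1, \ldots, x_a)$, and the subfield $\mathbb{Q}(f_1, \ldots, f_b)$ has transcendence degree equal to the generic rank $r$; therefore $f(p)$ has transcendence degree exactly $r$ over $\mathbb{Q}$. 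This rules out $f(p) \in f(W)$, and since $f(q) = f(p)$, we deduce $q \notin W$ and $\rank Df_q = r$.

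With equal ranks in hand, the construction of $g$ is standard. By upper semicontinuity of rank together with the maximality of $r$, the rank of $Df$ is locally constant equal to $r$ on open neighbourhoods of both $p$ and $q$. The constant rank theorem then provides local diffeomorphisms $\phi_p : (U_p, p) \to (\mathbb{R}^a, 0)$, $\phi_q : (U_q, q) \to (\mathbb{R}^a, 0)$, and $\psi : (V, f(p)) \to (\mathbb{R}^b, 0)$ (with $V$ a neighbourhood of $f(p) = f(q)$), such that both compositions $\psi \circ f \circ \phi_p^{-1}$ and $\psi \circ f \circ \phi_q^{-1}$ coincide on their domains with the standard projection $\mathbb{R}^r \times \mathbb{R}^{a-r} \to \mathbb{R}^r \times \{0\} \hookrightarrow \mathbb{R}^b$. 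Shrinking if necessary, choose a neighbourhood $N_q$ of $q$ with $\phi_q(N_q) \subseteq \phi_p(U_p)$, and define $g := \phi_p^{-1} \circ \phi_q$ on $N_q$, with $N_p := g(N_q)$. Then $g$ is a diffeomorphism $N_q \to N_p$, $g(q) = \phi_p^{-1}(0) = p$, and for all $x \in N_q$ we have $\psi(f(g(x))) = \psi(f(x))$ by the common factorisation, so $f \circ g = f$ on $N_q$, completing the proof.
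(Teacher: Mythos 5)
Your first two steps correctly reproduce the technique of Connelly \cite{C2005} that the paper's footnote alludes to: the sub-maximal-rank locus $W$ is cut out by polynomials over $\mathbb{Q}$, so the generic point $p$ avoids it, and the transcendence-degree comparison ($f(p)$ has transcendence degree $r$ over $\mathbb{Q}$, while every point of the $\mathbb{Q}$-semialgebraic set $f(W)$ of dimension at most $r-1$ has transcendence degree at most $r-1$) correctly forces $\rank Df_q = r$. (A minor slip: it is \emph{lower} semicontinuity of rank, i.e.\ openness of the set where $\rank Df \geq r$, that combines with maximality of $r$ to give local constancy.)

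The genuine gap is in your final step. Applying the constant rank theorem at $p$ yields charts $(\phi_p,\psi_1)$ and at $q$ yields charts $(\phi_q,\psi_2)$; you assert without justification that a single target chart $\psi$ serves for both. That assertion is equivalent to the local images $f(U_p)$ and $f(U_q)$ coinciding as germs of $r$-dimensional submanifolds at $f(p)=f(q)$, since each image is $\psi_i^{-1}(\mathbb{R}^r\times\{0\})$ in its own chart --- and this is precisely the hard content of the proposition, because $f\circ g=f$ with $g$ a diffeomorphism onto $N_p$ already forces $f(N_p)=f(N_q)$. The claim is \emph{false} for general constant-rank maps even when $\rank Df_p=\rank Df_q=r$: for $f(x,y)=(x^2-1,\,x^3-x)$, which has constant rank $1$ everywhere, the points $(1,y)$ and $(-1,y)$ have equal image $(0,0)$, but the two local images are the two branches of a nodal cubic with distinct tangent lines, so no such $g$ exists. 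Genericity of $p$ rescues the statement, but only via a second argument of the same flavour as your transcendence-degree step: the Zariski closure $X$ of the image of $f$ is an irreducible $\mathbb{Q}$-variety of dimension $r$ whose singular locus is a proper $\mathbb{Q}$-subvariety of dimension less than $r$, so $f(p)$, having transcendence degree $r$ over $\mathbb{Q}$, is a smooth point of $X$; hence the real points of $X$ near $f(p)$ form an $r$-manifold, and $f(U_p)$ and $f(U_q)$, being $r$-submanifolds of it containing $f(p)$, are both open neighbourhoods of $f(p)$ in it and therefore share a common germ. Once that is inserted (shrinking $U_q$ so that $f(U_q)\subseteq f(U_p)$ and taking $\psi=\psi_1$), your construction $g=\phi_p^{-1}\circ\phi_q$ goes through.
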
 

Let\footnote{Here we use a dashed arrow to represent that $F$ is not well-defined at all points.} $F:\mathbb{R}^{d|V|}\dashrightarrow \mathbb{R}^{|E|+|V|-1}$ be the concatenation of the $d$-dimensional rigidity map $f_{G,d}$
and the \emph{dilation map} $D:\mathbb{R}^{d|V|}\dashrightarrow \mathbb{R}^{|V|-1}$ such that $D(p)=(\dots, (\frac{p_i(v)}{p_i(v_0)}), \dots)_{v\in V\setminus\{v_0\}}$. Then $J_{v_0}(G,p)$ is the Jacobean of $F$ evaluated at $p\in \mathbb{R}^{d|V|}$.

\begin{proof}[Proof of \Cref{l:genstresses}]
    As $p$ is generic and $q$ is equivalent to $p$,
    both $F(p)$ and $F(q)$ are well-defined.
    By applying \Cref{prop:bob} to the map $F$,
    there exist open neighbourhoods $N_p$ of $p$ and $N_q$ of $q$ in $\mathbb{R}^{d|V|}$ and a diffeomorphism $g : N_q \rightarrow N_p$ such that $g(q) = p$ and, for all $q \in N_q$, $F(g(q)) = F(q)$. Taking differentials at $q$,
    we obtain $J_{v_0}(G,q)=J_{v_0}(G,p)L$
    where $L$ is the Jacobian matrix of $g$ at $q$. If $(\sigma, \lambda) \in \ker J_{v_0}(G,p)^T$ we have 
    $$J_{v_0}(G,q)^T (\sigma, \lambda) = L^TJ_{v_0}(G,p)^T (\sigma, \lambda) = L^T \mathbf{0}  = \mathbf{0}.$$ 
    Hence $(\sigma,\lambda) \in \ker J_{v_0}(G,q)^T$.
    It now follows by symmetry that
    $\ker J_{v_0}(G,p)^T =\ker J_{v_0}(G,q)^T$.
    Hence, by \Cref{l:samestresses}, we have $\ker DR_k(G,p)^T =\ker DR_k(G,q)^T$.
    Furthermore, given our previous choice of $\sigma$ and any $i \in \{d-k+1,\ldots, d\}$, we have 
    \begin{align*}
        p_i(v_0)\left( \sum_{w \in N_G(v)} \sigma(vw) (p_i(v) - p_i(w)) \right) = q_i(v_0)\left( \sum_{w \in N_G(v)} \sigma(vw) (q_i(v) - q_i(w)) \right).
    \end{align*}
    We now obtain \cref{e:genstresses} from the above equation by substituting $q_i=\alpha_i p_i$, dividing both sides by $p_i(v_0)$ and rearranging.
\end{proof}

\Cref{l:genstresses} is a key step in obtaining an analogue of the sufficiency direction of the following central theorem in rigidity theory.

\begin{theorem}[Connelly \cite{C2005}; Gortler, Healy and Thurston \cite{GHT}]\label{t:gr}
	Let $(G,p)$ be a generic framework in $\mathbb{R}^d$.
	Then $(G,p)$ is globally $d$-rigid if and only if there exists an equilibrium stress $\omega$ of $(G,p)$ such that $\rank \Omega(\omega)=|V|-d-1$.
	Furthermore,
	if $(G,q)$ is also a generic framework in $\mathbb{R}^d$,
	then $(G,q)$ is globally $d$-rigid if and only if $(G,p)$ is globally $d$-rigid.
\end{theorem}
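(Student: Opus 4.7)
The plan is to treat the sufficient direction (due to Connelly) and the necessity together with the genericity statement (due to Gortler, Healy and Thurston) separately, since they require substantially different machinery. Sufficiency is essentially linear algebra around the stress matrix combined with the analytic continuation provided by \Cref{prop:bob}; the converse is considerably deeper.

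For sufficiency, fix an equilibrium stress $\omega$ with $\rank \Omega(\omega) = |V|-d-1$. The equilibrium condition, together with the row-sum-zero property of the Laplacian, forces the $d+1$ vectors $\mathbf{1}, p_1, \ldots, p_d$ to lie in $\ker \Omega(\omega)$; genericity of $p$ ensures that they are linearly independent, so by the rank hypothesis they span the kernel. Now let $(G,q)$ be an equivalent framework. Apply \Cref{prop:bob} to the rigidity map $f_{G,d}$ to obtain a local diffeomorphism $g$ near $q$ with $f_{G,d} \circ g = f_{G,d}$ and $g(q) = p$; differentiating gives $R(G,q) = R(G,p) L$, so that $\omega^{T} R(G,q) = 0$ and $\omega$ is also an equilibrium stress of $(G,q)$. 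Each coordinate $q_i$ therefore lies in $\ker \Omega(\omega)$, and so is an affine combination of $\mathbf{1}, p_1, \ldots, p_d$. Equivalently $q = Ap + b$ for some $d \times d$ matrix $A$ and translation vector $b$; edge-length preservation then reads
\begin{equation*}
    (p(v)-p(w))^{T} (A^{T} A - I)(p(v)-p(w)) = 0 \qquad \text{for all } vw \in E,
\end{equation*}
and the rigidity forced by the stress hypothesis together with genericity of $p$ implies $A^{T} A = I$. Hence $q$ is congruent to $p$.

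For the necessity direction and the genericity statement, one passes to $\mathbb{C}^{d}$ and studies the Zariski closure of the image of $f_{G,d}$, the so-called measurement variety. The heart of the GHT argument is that generic global rigidity is equivalent to a generic point of this variety having a fibre consisting of a single orbit under the complex congruence group, and this in turn is equivalent to the tangent space at the measurement point having the maximal possible codimension, which is detected precisely by the existence of an equilibrium stress of rank $|V|-d-1$. The genericity of the property itself then follows because the stress-rank condition is semi-algebraic over $\mathbb{Q}$ and hence either holds at every generic configuration or at none, depending only on $G$.

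The principal obstacle is the GHT direction. Connelly's sufficiency fits naturally with the tools already developed in the excerpt: \Cref{l:samestresses} and \Cref{l:genstresses} supply precisely the stress-transfer mechanism that will be needed when adapting this argument to the $(d,k)$-analogue the authors announce. The converse, however, genuinely requires the algebraic-geometric framework of measurement varieties together with dimension counting on their fibres, and no elementary shortcut is known.
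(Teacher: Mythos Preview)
The paper does not prove \Cref{t:gr}; it is quoted from the literature (Connelly for sufficiency, Gortler--Healy--Thurston for necessity and for the genericity statement) and used as a black box in the proof of \Cref{t:dkgr1}. So there is no ``paper's own proof'' to compare against.

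Your sufficiency sketch is essentially Connelly's original argument, and the stress-transfer step via \Cref{prop:bob} is correct. One genuine gap, however: the line ``the rigidity forced by the stress hypothesis together with genericity of $p$ implies $A^{T}A = I$'' is doing real work that you have not supplied. What is needed is that the edge directions $\{p(v)-p(w): vw\in E\}$ do not all lie on a quadric at infinity, i.e.\ there is no non-zero symmetric $S$ with $(p(v)-p(w))^{T}S(p(v)-p(w))=0$ for every edge. This is not automatic from genericity alone; Connelly deduces it by first showing that a maximum-rank stress forces infinitesimal rigidity, and then that a generic infinitesimally rigid framework on at least $d+2$ vertices has edge directions spanning the space of symmetric matrices. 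Without this lemma the affine map $A$ could a priori be a non-orthogonal transformation preserving all edge lengths.

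Your GHT outline is, as you acknowledge, only a signpost rather than a proof: the equivalence between generic global rigidity and the fibre/tangent-space condition on the measurement variety is the substantive content of \cite{GHT}, and nothing in the present paper or your sketch reproduces it. That is fine given that the paper itself simply cites the result.
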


We will prove that a similar stress rank condition is sufficient for global $(d,k)$-rigidity.

\begin{theorem}\label{t:dkgr1}
	Let $(G,p)$ be a generic framework in $\mathbb{R}^d$.
	If there exists $\sigma \in \ker DR_k(G,p)^T$ such that $\rank \Omega(\sigma)=|V|-d+k-1$,
	then $(G,p)$ is globally $(d,k)$-rigid.
\end{theorem}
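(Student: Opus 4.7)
The plan is to mirror Connelly's sufficiency argument \cite{C2005} by first reducing to the lower-dimensional framework $(G,\tilde p)$, then handling the $k$ additional dilation directions separately. The key numerical coincidence is that $|V|-d+k-1 = |V|-(d-k)-1$, which is exactly the Connelly stress-rank threshold for global rigidity in dimension $d-k$.

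The first step is to apply \Cref{t:gr} to $(G,\tilde p)$. Since $p$ is generic in $\mathbb{R}^d$, $\tilde p$ is generic in $\mathbb{R}^{d-k}$; by the characterisation of $\ker DR_k(G,p)^T$ observed at the start of \Cref{sec:global}, $\sigma$ is an equilibrium stress of $(G,\tilde p)$; and $\Omega(\sigma)$ has the required rank. Hence $(G,\tilde p)$ is globally $(d-k)$-rigid.

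Next, let $(G,q)$ be any framework $(d,k)$-equivalent to $(G,p)$, with $q_i = \alpha_i p_i$ for $i \in \{d-k+1,\ldots,d\}$. I would apply \Cref{l:genstresses} to deduce that $\sigma \in \ker DR_k(G,q)^T$ and, reading \cref{e:genstresses} vertex-by-vertex, that for each $i > d-k$ either $\alpha_i^2 = 1$ or $\Omega(\sigma) p_i = 0$. To rule out the latter, note that $\ker \Omega(\sigma)$ has dimension $d-k+1$ and contains the $d-k+1$ vectors $\mathbf{1}, p_1, \ldots, p_{d-k}$ ($\mathbf{1}$ because $\Omega(\sigma)$ is a weighted graph Laplacian, the rest because $\sigma$ is an equilibrium stress of $(G,\tilde p)$). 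These are $\mathbb{R}$-linearly independent by genericity of $\tilde p$, so they form a basis of $\ker\Omega(\sigma)$. Membership of $p_i$ in their $\mathbb{R}$-span would force a non-trivial polynomial identity with integer coefficients among the coordinates of $p$ (the vanishing of certain $(d-k+2)\times(d-k+2)$ minors), contradicting genericity. Hence $\alpha_i \in \{-1,+1\}$.

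With $\alpha_i = \pm 1$ in hand, $(q_i(v)-q_i(w))^2 = (p_i(v)-p_i(w))^2$ for $i > d-k$, so the equivalence $\|p(v)-p(w)\| = \|q(v)-q(w)\|$ collapses to $\|\tilde p(v)-\tilde p(w)\| = \|\tilde q(v)-\tilde q(w)\|$ for every edge. The global $(d-k)$-rigidity of $(G,\tilde p)$ then supplies a Euclidean isometry $T$ of $\mathbb{R}^{d-k}$ with $\tilde q = T \circ \tilde p$, and extending $T$ by multiplication by $\alpha_i$ in each of the last $k$ coordinates produces a Euclidean isometry $\hat T$ of $\mathbb{R}^d$ with $q = \hat T \circ p$. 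The main obstacle is the kernel-membership step: ruling out $p_i \in \ker\Omega(\sigma)$ requires converting a potential $\mathbb{R}$-linear dependence into a non-trivial algebraic identity over $\mathbb{Q}$ to invoke genericity, and this must be argued carefully because $\sigma$ itself depends rationally on $p$.
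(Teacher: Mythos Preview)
Your proof is correct and follows essentially the same route as the paper: use \Cref{l:genstresses} to force $\alpha_i=\pm 1$ via the stress-rank hypothesis, reduce to equivalence of $(G,\tilde p)$ and $(G,\tilde q)$, and invoke \Cref{t:gr} in dimension $d-k$. Your flagged ``main obstacle'' is not actually a difficulty: the linear independence of $\mathbf{1},p_1,\ldots,p_{d-k},p_i$ is a statement purely about the coordinates of $p$ (a nonvanishing $(d-k+2)\times(d-k+2)$ minor with integer-polynomial entries in those coordinates), so genericity applies directly and $\sigma$ never enters.
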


\begin{proof}
	Fix a vertex $v_0 \in V$ and choose any $(d,k)$-equivalent framework $(G,q)$.
	By \Cref{l:genstresses},
	\cref{e:genstresses} holds for each $i \in \{d-k+1,\ldots, d\}$.
	If $\sum_{w \in N_G(v)} \sigma(vw) (p_i(v) - p_i(w)) = 0$ for some $i \in \{d-k+1,\ldots,d\}$ then $\rank \Omega(\sigma)< |V| - d+k-1$ (since its kernel already contains the all 1's vector and $p_1,\ldots,p_{d-k}$),
	hence $\alpha_i = \pm 1$ for each $i \in \{d-k+1,\ldots, d\}$.
	By applying reflections to $q$, we may suppose that $\alpha_i=1$, and hence $q_i = p_i$, for all $i \in \{d-k+1, \ldots,d\}$.
	Hence for each $vw \in E$ we have
	\begin{align*}
		\|p(v) - p(w)\|^2 - \|q(v)-q(w)\|^2 &= \|\tilde{p}(v) - \tilde{p}(w)\|^2 - \|\tilde{q}(v)-\tilde{q}(w)\|^2
	\end{align*}
	and thus $(G,\tilde{p})$ is equivalent to $(G,\tilde{q})$.
	Since $\sigma$ is an equilibrium stress of $(G,\tilde{p})$,
	$\tilde{q}$ is congruent to $\tilde{p}$ by \Cref{t:gr}.	
	If we choose any two (possibly non-adjacent) vertices $v,w \in V$, then
	\begin{align*}
		\|q(v)-q(w)\|^2 &= \|\tilde{q}(v)-\tilde{q}(w)\|^2 + \sum_{i=d-k+1}^d (q_i(v) - q_i(w))^2 \\
		&=\|\tilde{p}(v)-\tilde{p}(w)\|^2 + \sum_{i=d-k+1}^d (p_i(v) - p_i(w))^2 \\
		&= \|p(v) - p(w)\|^2,
	\end{align*}
	hence $q$ is congruent to $p$.
\end{proof}

We next show that we can use the theorem to recursively construct a large class of globally $(d,k)$-rigid graphs.
The proof techniques of the following two lemmas are sufficiently well used in the literature that we provide only sketches.

\begin{lemma}\label{lem:1extstress}
Suppose $(G, p)$ is a generic framework in $\mathbb{R}^d$ on at least $d-k+1$ vertices. Let $G' = (V', E')$ be obtained from $G$ by deleting an edge $e = v_1v_2$ and adding a new vertex $v_0$ and new edges $v_0v_1, v_0v_2, \dots, v_0v_{d-k+1}$. Then there exists a map $q : V'\rightarrow \mathbb{R}^d$ such that $\rank J_{v_0}(G',q) = \rank J_{v_0}(G,p)+d$. Furthermore, if $\sigma$ is an equilibrium stress for $(G,p)$ with corresponding stress matrix $\Omega(\sigma)$ and $\sigma_e\neq 0$, then there exists an equilibrium stress $\sigma'$ for $(G', q)$ such that $\rank \Omega(\sigma')=\rank \Omega(\sigma) +1$.
\end{lemma}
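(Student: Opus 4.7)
The approach is to construct $q$ by placing $v_0$ on the line through $v_1$ and $v_2$ in every coordinate, adapted so that a natural extension of $\sigma$ remains a valid $(d,k)$-stress. Fix a generic $\alpha \in \mathbb{R} \setminus \{0,1\}$, set $q(v) := p(v)$ for $v \in V$, and $q(v_0) := \alpha p(v_1) + (1-\alpha) p(v_2)$; genericity of $p$ and $\alpha$ guarantees $q_i(v_0) \neq 0$ for each $i \in \{d-k+1,\ldots,d\}$. Define $\sigma' : E' \to \mathbb{R}$ by $\sigma'(vw) := \sigma(vw)$ for $vw \in E \setminus \{e\}$, $\sigma'(v_0 v_1) := \sigma_e/(1-\alpha)$, $\sigma'(v_0 v_2) := \sigma_e/\alpha$, and $\sigma'(v_0 v_i) := 0$ for $3 \leq i \leq d-k+1$. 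The central identities $q_i(v_0) - q_i(v_1) = (1-\alpha)(p_i(v_2) - p_i(v_1))$ and $q_i(v_0) - q_i(v_2) = \alpha(p_i(v_1) - p_i(v_2))$ hold in every coordinate $i$ and drive all subsequent calculations.

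A vertex-by-vertex check using the equilibrium of $\sigma$ verifies that $\sigma'$ is an equilibrium stress of $(G', \tilde q)$; in particular, at $v_0$ the two nonzero contributions $\sigma'(v_0 v_1)(\tilde q(v_0) - \tilde p(v_1))$ and $\sigma'(v_0 v_2)(\tilde q(v_0) - \tilde p(v_2))$ cancel by colinearity. Substituting the identities above into $q_i^T \Omega(\sigma') q_i$ and simplifying shows that the new-edge contributions reconstruct exactly $\sigma_e(p_i(v_1) - p_i(v_2))^2$, so $q_i^T \Omega(\sigma') q_i = p_i^T \Omega(\sigma) p_i = 0$ for each $i > d-k$, giving $\sigma' \in \ker DR_k(G', q)^T$. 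For the stress-matrix rank, I characterise $\ker \Omega(\sigma')$ as the set of $z \in \mathbb{R}^{V'}$ satisfying $z|_V \in \ker \Omega(\sigma)$ and $z(v_0) = \alpha z(v_1) + (1-\alpha) z(v_2)$: the $\supseteq$ direction is a direct calculation; for $\subseteq$, equilibrium of $z$ at $v_0$ with $\sigma_e \neq 0$ forces the affine relation, after which the equations at the remaining vertices reduce to $z|_V \in \ker \Omega(\sigma)$. The restriction $z \mapsto z|_V$ is thus a linear bijection, so $\dim \ker \Omega(\sigma') = \dim \ker \Omega(\sigma)$ and $\rank \Omega(\sigma') = |V'| - \dim \ker \Omega(\sigma) = \rank \Omega(\sigma) + 1$.

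For the Jacobian rank, I propagate the same idea at the level of left kernels. The extension $\sigma \mapsto \sigma'$ gives a linear injection $\ker DR_k(G,p)^T \hookrightarrow \ker DR_k(G',q)^T$. Surjectivity reduces to showing that every $\tau \in \ker DR_k(G',q)^T$ satisfies $\tau(v_0 v_i) = 0$ for $i \geq 3$; granted this, equilibrium at $v_1$ determines a stress $\sigma$ on $E$ with $(\sigma)' = \tau$, and the dilation conditions transfer back via the same identity $q_i^T \Omega(\tau) q_i = p_i^T \Omega(\sigma) p_i$. The vanishing claim follows from equilibrium of $\tau$ at $v_0$ in the first $d-k$ coordinates, $\sum_{j=1}^{d-k+1} \tau(v_0 v_j)(\tilde q(v_0) - \tilde p(v_j)) = 0$: the $d-k+1$ vectors in $\mathbb{R}^{d-k}$ appearing here admit a one-dimensional space of linear relations, and genericity of $\tilde p(v_1), \ldots, \tilde p(v_{d-k+1})$ (with $\tilde q(v_0)$ colinear only with $\tilde p(v_1), \tilde p(v_2)$) forces this relation to be supported on $\{1,2\}$. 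Equating kernel dimensions yields $\rank DR_k(G',q) = \rank DR_k(G,p) + (d-k)$, and \Cref{t:matrixrank} converts this to $\rank J_{v_0}(G',q) = \rank J_{v_0}(G,p) + d$. The main obstacle is this surjectivity/genericity step: ruling out parasitic stresses on the edges $v_0 v_i$ with $i \geq 3$ requires $\{\tilde p(v_1), \ldots, \tilde p(v_{d-k+1})\}$ to be in sufficiently general affine position in $\mathbb{R}^{d-k}$, which is guaranteed by the genericity hypothesis on $p$ together with the assumption $|V| \geq d-k+1$.
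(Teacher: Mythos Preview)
Your proof is correct and follows the same collinear-triangle approach as the paper's sketch: place $v_0$ on the line through $v_1,v_2$ and extend the stress in the natural way, the only cosmetic differences being your use of a generic $\alpha$ in place of the midpoint $\alpha=\tfrac12$ and your route to the Jacobian rank via the left kernel of $DR_k$ together with (the proof of) \Cref{t:matrixrank} rather than a direct appeal to the standard collinear-triangle argument for $J_{v_0}$. You also verify explicitly that $q_i^T\Omega(\sigma')q_i = 0$ for $i>d-k$, a point the paper's sketch leaves implicit but which is needed for the application in \Cref{cor:1extglob}.
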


\begin{proof}[Sketch of proof]
Define $(G', q)$ by putting $q(v) = p(v)$ for all $v \in V$ and $q(v_0) = \frac{1}{2}( p(v_1) +  p(v_2))$. It is now straightforward to use the standard ``collinear triangle'' technique (see, for example, \cite{Wlong}) to show that $\rank J_{v_0}(G',q) = \rank J_{v_0}(G,p)+d$.

Let $\sigma'$ be the equilibrium stress for $(G', q)$ defined by putting $\sigma'_f = \sigma_f$ for all 
$f \in E-e$, $\sigma'_{v_0v_1} = 2\sigma_e$, $\sigma'_{v_0v_2} = 2\sigma_e$
and $\sigma_f=0$ otherwise. It is straightforward to verify that $\sigma'$ is an equilibrium stress. 
We may now manipulate the stress matrix $\Omega(\sigma')$ to see that $\rank \Omega(\sigma') = \rank \Omega(\sigma) +1$ (see \cite[Lemma 6.1]{JNstress} for the same argument in a different context).
\end{proof}

\begin{lemma}\label{lem:perturbstress}
Suppose $(G,p)$ is a $(d,k)$-rigid framework in $\mathbb{R}^d$ and  
$\rank \Omega = |V|-d + k-1$ for some equilibrium stress $\sigma$ of $(G, p)$.
Then $(G, q)$ has an equilibrium stress $\sigma'$ with $\rank \Omega' = |V|-d+k-1$ for all generic $q \in \mathbb{R}^{d|V|}$. In addition, $\sigma'$ can be chosen so that $\sigma'_e\neq 0$ for all $e \in E$. 
\end{lemma}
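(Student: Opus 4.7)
The plan is to regard the equilibrium stresses as a rational family over the configuration space and reduce both conclusions to the nonvanishing of certain polynomials. The main tool is a parametric version of the openness of the maximum-rank condition for matrices, combined with the observation that a nonzero polynomial with rational coefficients cannot vanish at a generic point.

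First, I would fix a Zariski open neighbourhood $U$ of $p$ on which $\rank DR_k(G,q)$ is constant, and set $s := |E| - \rank DR_k(G,p)$ (the generic dimension of the stress space). Applying Cramer's rule to a fixed maximal nonsingular minor of $DR_k(G,q)$, I obtain rational functions $\sigma_1(q), \dots, \sigma_s(q)$ on $U$ whose values form a basis of $\ker DR_k(G,q)^T$ at every $q \in U$. Setting $\sigma(q,t) := \sum_i t_i \sigma_i(q)$ gives an $s$-parameter family of equilibrium stresses, whose stress matrix $\Omega(q,t) := \Omega(\sigma(q,t))$ has entries rational in $q$ and linear in $t$.

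The hypothesis furnishes a $t^\star$ with $\sigma(p,t^\star) = \sigma$; in particular, some $(|V|-d+k-1)\times(|V|-d+k-1)$ minor $\mu(q,t)$ of $\Omega(q,t)$ is nonzero at $(p,t^\star)$, so $\mu$ is a nonzero polynomial with rational coefficients in $(q,t)$. Likewise, for every edge $e$ that is not a coloop of $\mathcal{R}_{d,k}(G)$, some $(\sigma_i(q))_e$ is not identically zero, so the linear form $L_e(q,t) := \sigma(q,t)_e$ is a nonzero polynomial in $(q,t)$. For any generic $q$, each of $\mu(q,\cdot)$ and the $L_e(q,\cdot)$ remains a nonzero polynomial in $t$: otherwise it would vanish on a Zariski-dense set of $q$ and hence identically, contradicting its nonzero value at $(p, t^\star)$. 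Therefore the intersection of the finitely many Zariski open sets $\{t : \mu(q,t) \neq 0\}$ and $\{t : L_e(q,t) \neq 0\}$ is a nonempty Zariski open subset of $\mathbb{R}^s$, and any $t$ in this intersection produces the required stress $\sigma' := \sigma(q,t)$.

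The delicate point I expect to be the main obstacle is the coloop issue: if some edge $e$ is a coloop of $\mathcal{R}_{d,k}(G)$, every equilibrium stress of $(G,q)$ vanishes at $e$, and no choice of $\sigma'$ can achieve $\sigma'_e \neq 0$. In the intended use of the lemma via \Cref{lem:1extstress} only a single non-coloop edge needs to survive the perturbation, and the argument above supplies this edge-by-edge; thus the ``for all $e \in E$'' conclusion should either be read as applying to non-coloop edges, or else accompanied by the implicit hypothesis that $G$ has no coloops in $\mathcal{R}_{d,k}$ (i.e., is redundantly $(d,k)$-rigid).
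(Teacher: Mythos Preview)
Your argument for the first conclusion follows the same Connelly--Whiteley blueprint as the paper's sketch: build a rational family of stresses and observe that maximal stress-matrix rank is an open algebraic condition that persists at generic $q$. For the second conclusion you take a somewhat different route. The paper perturbs iteratively: if $\sigma'_e = 0$, it finds a second stress $\sigma^*$ with $\sigma^*_e \neq 0$ (using that $G-e$ is still $(d,k)$-rigid) and replaces $\sigma'$ by $\sigma' + \varepsilon\sigma^*$, one edge at a time. You instead parametrise the whole stress space by $t$ and choose a single $t$ in the intersection of finitely many nonempty Zariski-open sets. Both work; yours is tidier once the rational basis is in place, though note that your $\mu$ and $L_e$ are rational in $q$, not polynomial, and your ``contradicting its nonzero value at $(p,t^\star)$'' clause only applies to $\mu$---for $L_e$ the correct witness is the non-coloop assumption itself, not the point $(p,t^\star)$.

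Your coloop worry, however, is not a genuine obstacle: the hypothesis of the lemma forces $\mathcal{R}_{d,k}(G)$ to have no coloops, so the ``for all $e\in E$'' conclusion holds as stated. Indeed, suppose $e$ were a coloop. Then the maximal-rank stress $\sigma'$ you produced at generic $q$ has $\sigma'_e=0$, so $\sigma'$ is an equilibrium stress of $(G-e,q)$ with $\rank\Omega(\sigma')=|V|-d+k-1$. Since $q$ is generic, \Cref{t:dkgr1} gives that $(G-e,q)$ is globally $(d,k)$-rigid, hence $(d,k)$-rigid, contradicting the assumption that $e$ was a coloop. This is precisely why the paper can assert, in its iterative step, that $(G-e,q)$ is $(d,k)$-rigid and hence that a stress $\sigma^*$ with $\sigma^*_e\neq 0$ exists. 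The same observation closes your argument without any extra hypothesis or restriction to non-coloop edges.
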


\begin{proof}[Sketch of proof]
The first conclusion uses the technique of Connelly and Whiteley \cite[Theorem 5]{CW}.
In essence,
we can define function $f$ from the set of frameworks with maximal rank $(d,k)$-rigidity matrix to the space $\mathbb{R}^E$ such that $f(q)$ is an equilibrium stress of $(G,q)$, each entry of $f$ is a rational function, and $f(p) = \omega$.
By interpreting a stress matrix having full rank as an algebraic condition regarding determinants one can see that every generic framework must also have a full rank stress matrix.

The second conclusion follows from the same argument as in \cite[Lemma 6.3]{JNstress}. Briefly, suppose $\sigma'_e=0$ for some $e\in E$. Then $\sigma'$ is an equilibrium stress of the $(d,k)$-rigid framework $(G-e,p)$. Hence there is another equilibrium stress $\sigma^*$ obtained by adding $e$ which is non-zero on $e$. Then for sufficiently small $\varepsilon$ the matrix $\Omega(\sigma'+\varepsilon \sigma^*)$ will have the same rank as $\Omega(\sigma')$.
\end{proof}

\begin{corollary}\label{cor:1extglob}
Let $G=(V,E)$, let $(G,p)$ be a generic framework in $\mathbb{R}^d$ and suppose there exists $\sigma \in \ker DR_k(G,p)^T$ such that $\sigma_e\neq 0$ for all $e\in E$ and $\rank \Omega(\sigma)=|V|-d+k-1$. Let $H$ be a graph obtained from $G$ by a sequence of $(d-k)$-dimensional 1-extensions and edge additions. Then any generic framework $(H,q)$ is globally $(d,k)$-rigid.    
\end{corollary}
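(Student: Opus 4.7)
The plan is to induct on the length $m$ of the sequence of operations building $H$ from $G$, maintaining the stronger invariant that for every generic realisation $(H_m,q)$ of the intermediate graph $H_m$ there is a $\sigma \in \ker DR_k(H_m,q)^T$ with $\sigma_e\neq 0$ on every edge and $\rank\Omega(\sigma)=|V(H_m)|-d+k-1$. Once this invariant is secured, \Cref{t:dkgr1} delivers global $(d,k)$-rigidity of every generic $(H_m,q)$ for free. The base case $m=0$ is exactly the hypothesis of the corollary applied to $(G,p)$: by \Cref{t:dkgr1} this specific framework is globally, and hence locally, $(d,k)$-rigid, so \Cref{lem:perturbstress} transfers the nowhere-zero maximum-rank stress condition to every generic realisation of $G$.

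For the inductive step, suppose the invariant holds for $H_{m-1}$ and $H_m$ is obtained by one further operation. If $H_m=H_{m-1}+e$ is an edge addition, then any generic realisation $q$ of $H_m$ is also a generic realisation of $H_{m-1}$, so the inductive stress on $(H_{m-1},q)$ extends to $(H_m,q)$ by setting the new coordinate to zero; the stress matrix is unchanged, and since $H_m\supseteq H_{m-1}$ is $(d,k)$-rigid, \Cref{lem:perturbstress} promotes this extended stress to a nowhere-zero maximum-rank stress on every generic $(H_m,q)$. If instead $H_m$ is obtained from $H_{m-1}$ by a $(d-k)$-dimensional 1-extension along an edge $e=v_1v_2$, choose a generic $(H_{m-1},p')$ and use the inductive hypothesis to extract a stress $\sigma$ with $\sigma_e\neq 0$ and maximum rank. \Cref{lem:1extstress} then produces a realisation $q'$ of $H_m$ for which $\rank J_{v_0}(H_m,q')=\rank J_{v_0}(H_{m-1},p')+d$, certifying that $(H_m,q')$ is $(d,k)$-rigid, together with an equilibrium stress $\sigma'$ of rank $\rank\Omega(\sigma)+1=|V(H_m)|-d+k-1$. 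A final application of \Cref{lem:perturbstress} spreads this to every generic realisation of $H_m$, closing the induction.

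The main technical content sits inside the two preceding lemmas: \Cref{lem:1extstress} supplies the collinear-triangle construction that increments the stress-matrix rank by exactly one under a 1-extension, and \Cref{lem:perturbstress} handles the transfer from one (possibly non-generic) realisation to all generic realisations without destroying the nowhere-zero property. Given these, the induction itself is essentially rank arithmetic, and the global $(d,k)$-rigidity conclusion falls out of \Cref{t:dkgr1}.
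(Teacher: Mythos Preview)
Your proof is correct and follows essentially the same approach as the paper's own argument: both reduce the problem via \Cref{t:dkgr1} to showing that the full-rank stress property is preserved under 1-extensions and edge additions, with \Cref{lem:1extstress} handling the 1-extension step and \Cref{lem:perturbstress} guaranteeing the nowhere-zero condition needed before each 1-extension. Your write-up is simply more explicit about the induction and the invariant being maintained, whereas the paper compresses the same reasoning into three sentences.
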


\begin{proof}
By \Cref{t:dkgr1} it now suffices to show that a sequence of $(d-k)$-dimensional 1-extensions and edge additions preserves full rank stress matrix. This in turn follows from \Cref{lem:1extstress} and the fact that we may simply choose $\sigma_e=0$ for any edge addition $e$ provided that we can show that any edge $f$ we perform a 1-extension on has $\sigma_f\neq 0$.
This was proved in \Cref{lem:perturbstress}.
\end{proof}

\subsection{A necessary condition}

We also consider whether the following natural necessary conditions for global rigidity can be adapted.

\begin{theorem}[Hendrickson \cite{hendrickson}]\label{t:hendrickson}
    Let $(G,p)$ generic framework in $\mathbb{R}^d$.
    If $(G,p)$ is globally $d$-rigid then either $|V(G)|\leq d+1$ and $G$ is complete or $|V(G)|\geq d+2$
    and $G$ is $(d+1)$-connected and $G-e$ is $d$-rigid for any edge $e$ of $G$.
\end{theorem}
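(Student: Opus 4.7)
The plan is to follow Hendrickson's classical two-pronged strategy, treating the connectivity and redundancy conditions as independent arguments under the assumption $|V|\geq d+2$. The base case $|V|\leq d+1$ is immediate: on at most $d+1$ generic points, a complete graph's pairwise distances determine the configuration up to congruence, so $(G,p)$ is globally $d$-rigid.

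For the $(d+1)$-connectivity claim, I would argue by contradiction using a reflection. Suppose $S\subseteq V$ is a vertex cut with $|S|\leq d$, and decompose $V\setminus S = A\sqcup B$ with $A,B$ non-empty and no edges between them. Since $|S|\leq d$, the affine span of $p(S)$ has dimension at most $d-1$ and is contained in some hyperplane $H\subseteq\mathbb{R}^d$. Let $\rho:\mathbb{R}^d\rightarrow\mathbb{R}^d$ be the reflection across $H$, and define $q$ by $q|_{S\cup A}=p|_{S\cup A}$ and $q|_B=\rho\circ p|_B$. Edges inside $S\cup A$ preserve their lengths trivially; edges inside $S\cup B$ preserve their lengths because $\rho$ is an isometry fixing $p(S)$ pointwise; there are no $A$--$B$ edges. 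Thus $(G,q)$ is equivalent to $(G,p)$. Genericity of $p$ forces $p(B)\not\subseteq H$, so $q$ is not congruent to $p$, contradicting global $d$-rigidity.

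For the redundancy claim, fix an edge $e=uv\in E$ and suppose toward contradiction that $G-e$ is not $d$-rigid. Then by \Cref{thm:AR}, $\rank R(G-e,p)<d|V|-\binom{d+1}{2}$, so in a neighbourhood of the generic point $p$ the fibre of the squared-distance map $f_{G-e,d}$ is a smooth manifold of dimension strictly greater than $\binom{d+1}{2}$. Consider the analytic function $\phi(x):=\|x(u)-x(v)\|^2$ restricted to this fibre. If $\phi$ were constant on the fibre near $p$, then the fibre of $f_{G,d}$ through $p$ would coincide with that of $f_{G-e,d}$ near $p$, producing a non-trivial flex of $(G,p)$ and contradicting that $(G,p)$ is rigid (which it is, being globally rigid). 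Hence $\phi$ is non-constant; its level set $\{\phi=\phi(p)\}$ inside the fibre is therefore a proper analytic subvariety of dimension strictly greater than $\binom{d+1}{2}-1$, which still exceeds the dimension $\binom{d+1}{2}$ of the isometry orbit of $p$ only after we pass to positive codimension; the right statement is that the isometry orbit has dimension $\binom{d+1}{2}$ while the level set has dimension $>\binom{d+1}{2}-1$, so level set points outside the orbit exist. Any such point $q$ is equivalent but not congruent to $p$, contradicting global $d$-rigidity.

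The main obstacle is the redundancy step, for which the delicate ingredient is ruling out the accidental possibility that $\phi$ is constant along every flex (which would force $(G,p)$ itself to flex). Genericity of $p$ is the key leverage: constancy of $\phi$ on the fibre is an algebraic identity that, if it held near the generic point $p$, would hold identically, giving an infinitesimal flex of $(G,p)$ via \Cref{thm:AR} and contradicting rigidity. A secondary subtlety -- ensuring that the witness configuration $q$ lies in $\mathbb{R}^d$ and not in a degenerate lower-dimensional subspace -- is automatic from working in a small neighbourhood of $p$, where the affine hull of the configuration is preserved by continuity. This mirrors the use of \Cref{prop:bob} in \Cref{l:genstresses} and suggests that the same template should adapt to the $(d,k)$ setting, which is presumably the content of the results the authors develop after this statement.
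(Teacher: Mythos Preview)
The paper does not prove this theorem: it is quoted from Hendrickson's paper and used as a black box, so there is no ``paper's own proof'' to compare against. That said, your connectivity argument is the standard one and is essentially correct (the non-congruence of $q$ to $p$ is most cleanly seen by checking that $\|p(a)-p(b)\| \neq \|p(a)-\rho(p(b))\|$ for $a\in A$, $b\in B$, which holds because $p(a)\notin H$ by genericity).

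Your redundancy argument, however, has a genuine gap that cannot be fixed by the local method you propose. Since $(G,p)$ is rigid, removing a single edge can increase the fibre dimension by at most one, so the $f_{G-e,d}$-fibre through $p$ has dimension exactly $\binom{d+1}{2}+1$. The level set $\{\phi=\phi(p)\}$ inside this fibre is, by definition, the local $f_{G,d}$-fibre through $p$, and since $(G,p)$ is rigid this \emph{is} the isometry orbit, of dimension exactly $\binom{d+1}{2}$. Your inequality ``dimension $>\binom{d+1}{2}-1$'' only gives dimension $\geq \binom{d+1}{2}$, which is attained with equality by the orbit itself; there is no contradiction and no point outside the orbit is produced. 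A purely local dimension count can never manufacture a \emph{distant} equivalent framework when $G$ is rigid.

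Hendrickson's actual argument is global and topological: one passes to the quotient of the $f_{G-e,d}$-fibre by congruences, shows (using genericity) that this is a smooth \emph{compact} manifold of positive dimension without boundary, and then analyses the induced map $\phi$ on this compact space. Compactness forces $\phi$ to attain its extrema away from the class of $p$ (since $\phi([p])$ is a regular value by rigidity of $G$), and a cobordism/parity argument on the compact manifold yields a second preimage point, i.e.\ an equivalent but non-congruent framework. The compactness step is the essential idea you are missing.
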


We can obtain a weak version of a similar result for arbitrary frameworks by linking global $(d,k)$-rigidity to global $(d-k)$-rigidity. In the generic case we suspect a stronger statement may be possible.

\begin{proposition}\label{p:dkhendrickson}
    Let $(G,p)$ a framework in $\mathbb{R}^d$.
    If $(G,p)$ is globally $(d,k)$-rigid and $G$ is not complete,
    then $(G,\tilde{p})$ is globally $(d-k)$-rigid.
\end{proposition}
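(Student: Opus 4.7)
The plan is to prove the contrapositive/direct implication by lifting a lower-dimensional counterexample to a higher-dimensional one, exploiting the fact that the last $k$ coordinates do not affect the edge-length constraints in an essential way once they are forced to match up to sign.

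First I would take any framework $(G,\tilde{q})$ in $\mathbb{R}^{d-k}$ equivalent to $(G,\tilde{p})$, i.e. with $\|\tilde{p}(v) - \tilde{p}(w)\| = \|\tilde{q}(v)-\tilde{q}(w)\|$ for every $vw \in E$. From this I construct a candidate realisation $q : V \to \mathbb{R}^d$ by setting $q_i := \tilde{q}_i$ for $i \in \{1,\ldots,d-k\}$ and $q_i := p_i$ for $i \in \{d-k+1,\ldots,d\}$. This lift is tailored so that the dilation constraints are trivially satisfied with $\alpha_i = 1$ for each $i \in \{d-k+1,\ldots,d\}$.

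Next I verify that $(G,q)$ is $(d,k)$-equivalent to $(G,p)$. For any edge $vw \in E$ the coordinate splitting gives
\begin{equation*}
    \|p(v) - p(w)\|^2 = \|\tilde{p}(v) - \tilde{p}(w)\|^2 + \sum_{i=d-k+1}^{d} (p_i(v)-p_i(w))^2,
\end{equation*}
and the analogous identity holds for $q$. Since $q_i = p_i$ for $i > d-k$ and $\|\tilde{p}(v)-\tilde{p}(w)\| = \|\tilde{q}(v)-\tilde{q}(w)\|$ by assumption, the two squared distances agree, giving \cref{eqn1}. The dilation constraints \cref{eqn2} hold by construction.

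Since $(G,p)$ is globally $(d,k)$-rigid, $(G,q)$ must be congruent to $(G,p)$, meaning the edge-length equality in fact holds for every pair $v,w \in V$. Restricting the same coordinate-splitting identity to an arbitrary pair $v,w \in V$ and cancelling the common sum $\sum_{i=d-k+1}^{d}(p_i(v)-p_i(w))^2$ yields $\|\tilde{p}(v) - \tilde{p}(w)\| = \|\tilde{q}(v) - \tilde{q}(w)\|$ for all $v,w \in V$. Hence $(G,\tilde{q})$ is congruent to $(G,\tilde{p})$, proving global $(d-k)$-rigidity of $(G,\tilde{p})$.

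There is no real obstacle here; the only point to be careful about is the definition of global $(d,k)$-rigidity relative to the dilation constraints — namely that one concludes congruence (not just isometry), as is implicit in the proof of \Cref{t:dkgr1}. The hypothesis that $G$ is not complete plays no explicit role in the argument itself; it is inherited from the ambient setup where $(d,k)$-rigidity for non-complete graphs is defined via $J_{v_0}(G,p)$ and the dilation constraints are genuinely imposed.
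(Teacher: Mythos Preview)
Your argument is correct and is essentially the same as the paper's: both lift a $(d-k)$-dimensional realisation $\tilde{q}$ to $q$ by appending the last $k$ coordinates of $p$, then use the coordinate-splitting identity to transfer (non-)congruence between the two levels. The only cosmetic difference is that the paper phrases it as a contrapositive (start with $\tilde{q}$ not congruent to $\tilde{p}$, conclude $q$ not congruent to $p$), whereas you run it directly (start with arbitrary equivalent $\tilde{q}$, conclude it must be congruent); the construction and the computation are identical.
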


\begin{proof}
	Suppose that $(G,\tilde{p})$ is not globally $(d-k)$-rigid.
	Then there exists a realisation $\tilde{q}:V \rightarrow \mathbb{R}^{d-k}$ of $G$ where $(G,\tilde{q})$ is equivalent to, but not congruent to, $(G,\tilde{p})$.
	Define $q :V \rightarrow \mathbb{R}^d$ to be the realisation of $G$ where $q(v) = (\tilde{q}(v),p_{d-k+1}(v),\ldots, p_d(v))$ for each $v \in V$.
	For each pair $v,w \in E$ we have
	\begin{align*}
		\|p(v) - p(w)\|^2 - \|q(v)-q(w)\|^2 &= \|\tilde{p}(v) - \tilde{p}(w)\|^2 - \|\tilde{q}(v)-\tilde{q}(w)\|^2,
	\end{align*}
	hence $(G,q)$ is $(d,k)$-equivalent to, but not congruent to, $(G,p)$,
	i.e., $(G,p)$ is not globally $(d,k)$-rigid.
\end{proof}

Observe that the resulting necessary connectivity condition for global $(d,k)$-rigidity, by depending on $k$, can be much lower than that needed for global $d$-rigidity. The following simple lemma illustrates that globally $(d,k)$-rigid graphs need not be highly connected.

\begin{lemma}\label{lem:unionglob}
Let $G_1$ and $G_2$ be graphs on at least $d-k+2$ vertices and with at least $d-k+1$ vertices in common. Let $(G,p)$ be a generic realisation of $G = G_1 \cup G_2$ and let $p^i = p|_{G_i}$. Suppose that $(G_i, p^i)$ is globally $(d,k)$-rigid for $i = 1, 2$. Then $(G, p)$ is globally $(d,k)$-rigid.   
\end{lemma}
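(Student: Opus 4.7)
The plan is to adapt the classical gluing argument for global rigidity to the $(d,k)$ setting. The key observation is that the ambient motions consistent with the dilation constraints form a group of dimension $\binom{d-k+1}{2}$: a $(d-k)$-dimensional Euclidean isometry group acting on the first $d-k$ coordinates, paired with sign reflections $x_j \mapsto \pm x_j$ on each of the last $k$ coordinates. Thus only $d-k+1$ generic shared vertices (rather than the $d+1$ required in the usual global $d$-rigidity gluing) should suffice to pin down the ambient motion.

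First I would take a $(d,k)$-equivalent realisation $(G,q)$ of $(G,p)$ with global dilation constants $\alpha_j$ satisfying $q_j = \alpha_j p_j$ for each $j \in \{d-k+1, \ldots, d\}$, aiming to show that $q$ is congruent to $p$. Since $p$ is generic, so is each restriction $p^i$, and $q|_{V(G_i)}$ is $(d,k)$-equivalent to $(G_i, p^i)$ with the same $\alpha_j$'s. By the global $(d,k)$-rigidity of each $(G_i, p^i)$, there exists an isometry $T_i$ of $\mathbb{R}^d$ with $T_i \circ p = q$ on $V(G_i)$.

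Next I would analyse the structure of each $T_i$. Following the argument in the proof of \Cref{t:dkgr1}, the compatibility of $T_i$ with the dilation relations $(T_i p)_j = \alpha_j p_j$ on the generic set $p(V(G_i))$, together with the orthogonality of the linear part of $T_i$, forces $\alpha_j = \pm 1$ and makes $T_i$ respect the direct-sum decomposition $\mathbb{R}^d = \mathbb{R}^{d-k} \oplus \mathbb{R}^k$, acting as a $(d-k)$-dimensional isometry $T_i'$ on the first factor and as $\mathrm{diag}(\alpha_{d-k+1}, \ldots, \alpha_d)$ (without translation) on the second. Since the $\alpha_j$'s are global constants, the second-factor parts of $T_1$ and $T_2$ coincide automatically. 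On $V(G_1) \cap V(G_2)$ the maps $T_1'$ and $T_2'$ agree at $\geq d-k+1$ generic points $\tilde{p}(v)$ which affinely span $\mathbb{R}^{d-k}$, so $T_1' = T_2'$, hence $T_1 = T_2$. This gives $q = T_1 \circ p$ on all of $V(G) = V(G_1) \cup V(G_2)$ and completes the congruence.

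The main obstacle I expect is the structural step for $T_i$: when $|V(G_i)|$ is close to its minimum $d-k+2$, fewer than $d+1$ generic points are available, so the block form of $T_i$ does not follow from its values on $p(V(G_i))$ alone. The remedy should be to exploit the genericity of $p^i$ along with both the dilation and orthogonality constraints on the linear part of $T_i$, in the same spirit as the $\alpha_j = \pm 1$ deduction inside the proof of \Cref{t:dkgr1}. Once this structural fact is in hand, the remainder is essentially the classical gluing argument.
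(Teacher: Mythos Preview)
Your approach is more explicit than the paper's. The paper, after applying an isometry so that $p=q$ on $V(G_1)\cap V(G_2)$, simply asserts that global $(d,k)$-rigidity of $(G_2,p^2)$ gives a \emph{unique} equivalent realisation of $G_2$ agreeing with $p$ on the intersection, and then matches $q|_{G_2}$ with $\iota\circ p^2$ directly; it never analyses the structure of the isometries $T_i$ at all.

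Your block-decomposition route is natural, and you correctly flag the structural step as the crux. However, the justification you point to does not go through. The deduction $\alpha_j=\pm 1$ inside the proof of \Cref{t:dkgr1} relies on \cref{e:genstresses} together with the stress-rank hypothesis $\rank\Omega(\sigma)=|V|-d+k-1$; it is not an argument from orthogonality of an isometry, and no full-rank stress (indeed no non-zero $\sigma\in\ker DR_k(G_i,p^i)^T$) is guaranteed under the bare hypothesis that $(G_i,p^i)$ is globally $(d,k)$-rigid. Using only that the linear part $A_i$ of $T_i$ is orthogonal and $p$ is generic, the relations $(A_i^Te_j-\alpha_j^{-1}e_j)\cdot(p(v)-p(w))=0$ for $v,w\in V(G_i)$ merely confine $A_i^Te_j-\alpha_j^{-1}e_j$ to a subspace of dimension at most $d-(|V(G_i)|-1)\leq k-1$, which for $k\geq 2$ need not be zero. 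So the block form of $T_i$, equivalently $\alpha_j=\pm 1$, remains unproved at the stated generality $|V(G_i)|\geq d-k+2$. (The paper's one-line uniqueness claim conceals the same difficulty: pinning $d-k+1$ generic points in $\mathbb{R}^d$ still leaves an $O(k)$ family of isometries, so uniqueness among merely \emph{equivalent} realisations does not follow from global $(d,k)$-rigidity alone.)
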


\begin{proof}
Let $(G, q)$ be a $(d,k)$-equivalent framework to $(G, p)$. By applying a suitable isometry to $q$ we may assume that $p(u) = q(u)$ for all $u\in V (G_1) \cap V (G_2)$. Since $(G_1,p^1)$ is globally $(d,k)$-rigid, $q|_{G_1} = \iota \circ p^1$ for some isometry $\iota$. Since $(G_2, p^2)$ is globally $(d,k)$-rigid, there is a unique equivalent realisation of $G_2$ which maps $u$ to $p(u)$ for all $u\in V (G_1) \cap V (G_2)$. Since both $(G_2,q|_{G_2})$ and $(G_2,\iota \circ p^2)$ have this property, $q|_{G_2} =\iota \circ p^2$. Hence $q=\iota \circ p$ and $(G, p)$ is congruent to $(G, q)$.    
\end{proof}

\section{Concluding remarks}
\label{sec:conclude}

We conclude the paper with two conjectures on generic global $(d,k)$-rigidity. The first is that a full stress rank is necessary as well as sufficient, analogous to \Cref{t:gr}.

\begin{conjecture}\label{t:dkgr2}
	Let $(G,p)$ be a generic framework in $\mathbb{R}^d$.
	If $(G,p)$ is globally $(d,k)$-rigid and $G$ is not complete,
	then there exists $\sigma \in \ker DR_k(G,p)^T$ such that $\rank \Omega(\sigma) =|V|-d+k-1$.
\end{conjecture}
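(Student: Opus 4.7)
The plan is to adapt the Gortler--Healy--Thurston proof of the necessity direction of \Cref{t:gr} to the $(d,k)$-setting, exploiting the numerical coincidence $|V|-d+k-1=|V|-(d-k)-1$: the conjectured target rank is exactly the full stress rank predicted by \Cref{t:gr} for generic global $(d-k)$-rigidity of the projected framework $(G,\tilde{p})$.

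The first step uses \Cref{p:dkhendrickson}. Since $(G,p)$ is globally $(d,k)$-rigid and $G$ is not complete, $(G,\tilde{p})$ is globally $(d-k)$-rigid, and as $p$ is generic so is $\tilde{p}$. Therefore \Cref{t:gr} yields an equilibrium stress $\omega$ of $(G,\tilde{p})$ with $\rank \Omega(\omega)=|V|-(d-k)-1$. The remaining task is to promote $\omega$ to a stress in $\ker DR_k(G,p)^T$, that is, to find an equilibrium stress of $(G,\tilde{p})$ satisfying the additional $k$ linear conditions $p_i^T \Omega(\sigma) p_i=0$ for $i\in\{d-k+1,\ldots,d\}$, without losing the maximal rank.

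Let $S\subseteq \mathbb{R}^E$ denote the space of equilibrium stresses of $(G,\tilde{p})$, and $S':=\ker DR_k(G,p)^T\subseteq S$ the subspace cut out by the $k$ dilation conditions. The locus of stresses of full rank $|V|-(d-k)-1$ is Zariski open in $S$, so the crux is to show that it meets $S'$. I would argue by contradiction: if every $\sigma\in S'$ has $\rank \Omega(\sigma)\leq |V|-(d-k)-2$, then the kernel of each such $\Omega(\sigma)$ strictly contains $\langle \mathbf{1},\tilde{p}_1,\ldots,\tilde{p}_{d-k}\rangle$. Following the GHT strategy, I would use this extra kernel direction to produce a non-trivial path in the fiber of the combined map $F=(f_{G,d},D)$ at $p$, yielding a $(d,k)$-equivalent but non-congruent realisation $q$ of $G$, contradicting global $(d,k)$-rigidity.

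The main obstacle is precisely this last construction. In the ordinary GHT setting an extra kernel vector parameterises an affine family of edge-length-preserving frameworks, and genericity lifts the resulting infinitesimal deformation to a genuine non-congruent equivalent realisation. In our setting the affine family must \emph{also} respect the coordinate-wise dilation ratios prescribed by \cref{eqn2}, a strong additional restriction on the allowed transformations; the usual affine deformations produced by GHT need not preserve the ratios $p_i(v)/p_i(v_0)$. Showing either that the extra kernel direction can always be chosen compatibly with the dilation constraints, or directly that an incompatible choice already forces a violation of global $(d,k)$-rigidity, is the delicate algebraic-geometric step and is presumably why the statement remains only a conjecture in the present paper.
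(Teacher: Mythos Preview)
The statement is a \emph{conjecture} in the paper, not a theorem: the paper offers no proof, only a short discussion immediately following the conjecture explaining why it appears difficult. Your proposal is therefore not being compared against any existing argument, and you yourself acknowledge at the end that you do not have a complete proof.

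That said, your outline tracks the paper's own discussion almost verbatim. The paper notes that \Cref{p:dkhendrickson} together with \Cref{t:gr} already yields an equilibrium stress $\sigma$ of $(G,\tilde{p})$ with $\rank\Omega(\sigma)=|V|-d+k-1$, and that the missing ingredient is precisely the $k$ additional conditions $p_i^T\Omega(\sigma)p_i=0$ for $i\in\{d-k+1,\ldots,d\}$ required for membership in $\ker DR_k(G,p)^T$. Your identification of the subspace $S'\subseteq S$ and the need to show that the Zariski-open locus of full-rank stresses meets $S'$ is the natural reformulation of this, and your diagnosis of the obstacle---that the GHT affine deformation need not respect the dilation ratios in \cref{eqn2}---is exactly the point the paper flags as ``non-trivial''.

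So there is a genuine gap, and it is the one you name: you have not shown that a deficient stress rank on all of $S'$ produces a $(d,k)$-equivalent non-congruent realisation. The GHT machinery does not transfer automatically because the relevant measurement map is $F=(f_{G,d},D)$ rather than $f_{G,d-k}$, and the degree-theoretic or Gauss-map arguments in \cite{GHT} would need to be redone for this map. Neither you nor the paper closes this gap; the conjecture remains open.
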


We know from \Cref{p:dkhendrickson} and \Cref{t:gr} that if $(G,p)$ is globally $(d,k)$-rigid,
then there exists an equilibrium stress $\sigma$ of $(G, \tilde{p})$ such that $\rank \Omega(\sigma)=|V|-d+k-1$.
For $\sigma \in \ker DR_k(G,p)^T$,
we also require that $p_i^T \Omega p_i =0$ for all $i \in \{d-k+1,\ldots,d\}$. It seems non-trivial to prove this.

We conclude the paper with an alternative conjectured characterisation motivated by the characterisation in \cite{J&J}.

\begin{conjecture}
	Suppose positive integers $d,k$ are chosen so that $d-k\leq 2$ and $(G,p)$ is generic.
 Then the following are equivalent:
 \begin{enumerate}
    \item \label{conj1} $(G,p)$ is globally $(d,k)$-rigid;
    \item \label{conj2} there exists a set $F$ of $k$ edges such that $(G-F,\tilde p)$ is $(d-k+1)$-connected and redundantly $(d-k)$-rigid; and
    \item \label{conj3} there exists a set $F$ of $k$ edges such that $(G-F,\tilde p)$ is globally $(d-k)$-rigid.
 \end{enumerate}
\end{conjecture}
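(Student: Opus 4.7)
The plan is to prove (ii) $\Leftrightarrow$ (iii) directly from known characterisations, then tackle the two implications linking (i) with (iii).

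Equivalence of (ii) and (iii) is immediate from the combinatorial characterisations of generic global $(d-k)$-rigidity in the relevant low dimensions. When $d-k=1$, every connected graph on at least three vertices is redundantly $1$-rigid, so ``$(d-k+1)$-connected and redundantly $(d-k)$-rigid'' collapses to ``$2$-connected'', the folklore characterisation of generic global $1$-rigidity. When $d-k=2$, the equivalence is precisely the Jackson--Jord\'an theorem \cite{J&J}. Applied to $(G-F,\tilde{p})$, each gives (ii) $\Leftrightarrow$ (iii).

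For (iii) $\Rightarrow$ (i), let $F$ realise (iii) and let $(G,q)$ be $(d,k)$-equivalent to $(G,p)$, so $q_i = \alpha_i^{-1} p_i$ for $i > d-k$. The length constraints rearrange to
\[
    \|\tilde{q}(v) - \tilde{q}(w)\|^2 - \|\tilde{p}(v) - \tilde{p}(w)\|^2 = \sum_{i=d-k+1}^d (1 - \alpha_i^{-2})(p_i(v) - p_i(w))^2
\]
for every $vw \in E$. If $\alpha_i^{-2} = 1$ for each $i$, then by reflecting in the final $k$ coordinates we may assume $q_i = p_i$; the above equations restricted to $E(G-F)$ are then genuine $(d-k)$-dimensional equivalence, and global $(d-k)$-rigidity of $(G-F,\tilde{p})$ yields $\tilde{q}$ congruent to $\tilde{p}$ and hence $q$ congruent to $p$, exactly as at the end of the proof of \Cref{t:dkgr1}. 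The real task is to force $\alpha_i^{-2} = 1$. Treating $(\tilde{q}, \mu_{d-k+1}, \ldots, \mu_d)$ with $\mu_i = 1 - \alpha_i^{-2}$ as unknowns, the displayed equations have Jacobian equal (up to a factor of $2$) to $DR_k(G,p)$; global $(d-k)$-rigidity of $(G-F,\tilde{p})$ determines $\tilde{q}$ modulo $(d-k)$-isometries from any fixed $\mu$ near $\mathbf{0}$ via the edges of $G-F$, and the $k$ remaining equations from $F$ become $k$ equations in the $k$ unknowns $\mu_i$ whose linearisation at $\mu = \mathbf{0}$ is generically non-singular. The main obstacle is globalising this local argument: a priori $\alpha_i^{-2}$ need not be close to $1$, and one must use the $(d-k+1)$-connectivity in (ii) to rule out ``partial-reflection'' branches, in the spirit of Hendrickson's classical connectivity argument.

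For (i) $\Rightarrow$ (iii), the starting point is \Cref{p:dkhendrickson}, which gives that $(G, \tilde{p})$ is already globally $(d-k)$-rigid. The plan is to strengthen this by selecting $k$ edges whose deletion preserves global $(d-k)$-rigidity of the projection. A natural strategy is a greedy/iterative one: by \Cref{t:dkrigidgraph}, $G$ contains a spanning minimally $(d-k)$-rigid subgraph together with $k$ further edges, and one would try to show that a generic realisation allows these surplus edges to be chosen to be ``globally redundant'' for the $(d-k)$-dimensional problem. The hard part, which I expect to be the main obstacle for the conjecture as a whole, is that $(d,k)$-rigidity only supplies $k$ surplus edges with no a priori control on their global-rigidity role; establishing their redundancy in the global $(d-k)$-rigidity sense likely requires a combinatorial tool such as the Jackson--Jord\'an ear decomposition, adapted to keep track of the dilation directions.
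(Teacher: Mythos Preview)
The statement you are addressing is a \emph{conjecture} in the paper, not a theorem; the paper offers no proof of the full equivalence. Your proposal, correspondingly, is not a proof either---as you yourself acknowledge, both directions linking (i) with (iii) contain genuine obstacles that you do not resolve.

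On (ii) $\Leftrightarrow$ (iii): your argument matches the paper's one-line remark that this follows from the folklore $1$-dimensional result and Jackson--Jord\'an when $d-k\leq 2$.

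On (iii) $\Rightarrow$ (i): the paper does not attempt this direction in general. Your local Jacobian argument is a reasonable heuristic, but as you note, it only shows that the trivial solution $\mu=\mathbf{0}$ is isolated, not unique; the ``partial-reflection branches'' you allude to are exactly the difficulty, and nothing in your sketch rules them out. The paper instead proves the special case $d=2$, $k=1$ by a completely different route: it exhibits a nowhere-zero full-rank stress on $K_4-e$ (\Cref{ex:k4-e}), then uses \Cref{lem:perturbstress} and \Cref{cor:1extglob} to propagate this through subdivisions and edge additions, invoking \Cref{t:dkgr1} rather than any direct analysis of the $\alpha_i$. Your Jacobian/local-inversion idea does not appear in the paper at all.

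On (i) $\Rightarrow$ (iii): the paper only says it ``suspects'' a Hendrickson-type argument would work, without details. Your greedy edge-selection strategy is plausible but, as you concede, the claim that the $k$ surplus edges can be chosen globally redundant is unproven and is the crux of the conjecture. Neither you nor the paper closes this gap.

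In summary: your (ii) $\Leftrightarrow$ (iii) is correct and matches the paper; your remaining sketches are strategies with explicitly flagged gaps, and the paper does not fill them either---it only handles $(d,k)=(2,1)$ for one direction, via the stress-matrix machinery rather than your proposed approach.
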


Since $d-k\leq 2$, by a folklore result and \cite{J&J}, \ref{conj2} and \ref{conj3} are equivalent. We suspect that \ref{conj1} implies \ref{conj3} could be proved by a Hendrickson-type argument \cite{hendrickson}.
If this were true it would suffice to prove that \ref{conj2} implies \ref{conj1}. We verify this in the case when $d=2$ and $k=1$. 

\begin{lemma}
Let $(G,p)$ be generic.
Suppose $G$ contains an edge $e$ such that $G-e$ is 2-connected.  
Then $(G,p)$ is globally $(2,1)$-rigid.
\end{lemma}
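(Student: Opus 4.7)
The plan is to apply \Cref{cor:1extglob} with seed graph $K_4$ minus an edge, i.e., the graph $K_4-v_2v_4$ appearing in \Cref{ex:k4-e}. That example shows $K_4-v_2v_4$ is a circuit in $\mathcal{R}_{2,1}$, so every generic realisation admits a unique (up to scalar) element $\sigma \in \ker DR_1(K_4-v_2v_4,p)^T$ that is non-zero on every edge; the explicit realisation displayed there reaches $\rank \Omega(\sigma) = 2 = |V|-d+k-1$. As this seed framework is $(2,1)$-rigid, \Cref{lem:perturbstress} then promotes the maximal rank to every generic realisation, so the seed meets the hypothesis of \Cref{cor:1extglob}. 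Consequently, if the given graph $G$ can be obtained from $K_4$ minus an edge via a sequence of $1$-dimensional $1$-extensions (edge subdivisions) and edge additions, \Cref{cor:1extglob} will give that a generic $(G,p)$ is globally $(2,1)$-rigid.

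It therefore remains to realise each $G$ satisfying the hypothesis of the lemma from this seed. Write $e=xy$; since $G$ is simple, $xy$ is not an edge of $G-e$, so every cycle of $G-e$ passing through both $x$ and $y$ has length at least $4$. By $2$-connectivity of $G-e$ such a cycle exists, and by the standard fact that every $2$-connected graph admits an ear decomposition beginning with any prescribed cycle, we may write $G-e = C_0 \cup E_1 \cup \dots \cup E_s$ with $|C_0| \geq 4$ and $x,y \in V(C_0)$. Starting from the seed, viewed as $C_4$ together with the chord $xy$, I first subdivide edges of the $4$-cycle (leaving the chord $xy$ untouched) until the cycle matches $C_0$, and then attach the ears $E_1,\dots,E_s$ one by one. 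An ear of length $1$ is simply an edge addition; for an ear of length $\ell \geq 2$ whose endpoints are currently non-adjacent I add the endpoint-to-endpoint edge and subdivide it $\ell-1$ times to produce the interior vertices; for an ear of length $\ell \geq 2$ whose endpoints are already joined by an edge I first subdivide that edge once, then re-add the edge, and continue subdividing to complete the interior path.

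This realises $G$ from $K_4$ minus an edge using only the two permitted operations, so \Cref{cor:1extglob} delivers the lemma. The main obstacle is the last graph-theoretic step, in particular the subcase in which an ear's endpoints are already joined by an edge, since a naive edge-addition there would create a forbidden multi-edge; the subdivide, re-add, and further subdivide trick above is the targeted workaround that keeps the entire construction within $1$-dimensional $1$-extensions and edge additions.
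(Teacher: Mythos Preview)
Your proof is correct and follows essentially the same route as the paper: seed with $K_4$ minus an edge using \Cref{ex:k4-e} and \Cref{lem:perturbstress}, then invoke \Cref{cor:1extglob} after showing that any $G$ with $G-e$ $2$-connected arises from the seed by subdivisions and edge additions. The paper simply asserts this last graph-theoretic step as ``easy to see''; your ear-decomposition argument (including the subdivide--re-add trick for ears with adjacent endpoints) is a clean way to make that explicit.
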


\begin{proof}
Let $H$ denote the graph obtained from $K_4$ by deleting an edge. 
It is easy to see that every graph $G$ such that $G-e$ is 2-connected can be obtained from $H$ by subdividing edges and adding new edges. In \Cref{ex:k4-e} we constructed a nowhere zero full rank stress for a specific $(d,k)$-rigid realisation of $H$. Hence the result follows from \Cref{lem:perturbstress} and \Cref{cor:1extglob} by induction. 
\end{proof}

\section*{Acknowledgements}

S.\,D.\ was supported by the Heilbronn Institute for Mathematical Research.  A.\,N.\ was partially supported by EPSRC grant EP/X036723/1.

\bibliographystyle{abbrv}

\begin{thebibliography}{40}

\bibitem{abbott} T. G. Abbott, \emph{Generalizations of Kempe’s Universality Theorem}, Masters thesis, MIT (2008).

\bibitem{Sensors}
B. Anderson, P. Belhumeur, T. Eren, D. Goldenberg, A. Morse, W. Whiteley and Y. Yang,
\emph{Graphical Properties of Easily Localizable Sensor Networks}, Wireless Networks, 15 (2009) 177--191.

\bibitem{asi-rot} L. Asimow and B. Roth, \emph{The rigidity of graphs}, Transactions of the American Mathematical Society, 245 (1978) 279--289.



\bibitem{C2005} R. Connelly, \emph{Generic global rigidity}, Discrete and Computational Geometry, 33 (2005) 549--563.

\bibitem{CW} R. Connelly and W. Whiteley, \emph{Global rigidity: the effect of coning}, Discrete and Computational Geometry, 43:4 (2010) 717--735.

\bibitem{capr} C.~Cros, P.~Amblard, C.~Rieur and J.~Da Rocha, \emph{Conic frameworks infinitesimal rigidity}, preprint, arXiv:2207.03310.

\bibitem{CJT} J. Cruickshank, B. Jackson and S. Tanigawa, \emph{Global rigidity of triangulated manifolds}, arXiv:2204.02503.

\bibitem{DK} S. Dewar and D. Kitson, \emph{Rigid graphs in cylindrical normed spaces}, arXiv:2305.08421.

\bibitem{Edm} J. Edmonds, \emph{Minimum partition of a matroid into independent subsets}, Journal of Research of the National Bureau of Standards, 69B (1965) 67–-72.

\bibitem{Glu} H, Gluck, \emph{Almost all simply connected closed surfaces are rigid}, In L. C. Glaser and T. B. Rushing, editors, Geometric Topology, volume 438 of Lecture Notes in Mathematics, pages 225--239, Berlin, Heidelberg (1975) Springer.

\bibitem{GHT} S. Gortler, A. Healy and D. Thurston, \emph{Characterizing generic global rigidity}, American Journal of Mathematics, 132:4 (2010) 897--939.

\bibitem{Haas} R. Haas, D. Orden, G. Rote, F. Santos, B. Servatius, H. Servatius, D. Souvaine, I. Streinu and W. Whiteley,
\emph{Planar minimally rigid graphs and pseudo-triangulations}, Computational Geometry, 31:1--2 (2005) 31--61.

\bibitem{hendrickson} B. Hendrickson, \emph{Conditions for unique graph realizations}, SIAM Journal of Computing, 21:1 (1992) 65--84.

\bibitem{J&J} B. Jackson and T. Jord\'{a}n, \emph{Connected Rigidity Matroids and Unique Realisations of Graphs}, Journal of Combinatorial Theory, Series B 94 (2005) 1--29.

\bibitem{JNstress} B. Jackson and A. Nixon, \emph{Stress Matrices and Global Rigidity of Frameworks on Surfaces},
Discrete and Computational Geometry, 54:3 (2015) 586--609.

\bibitem{JNglobal} B. Jackson and A. Nixon, \emph{Global rigidity of generic frameworks on the cylinder}, Journal of Combinatorial Theory, Series B 139 (2019) 193--229.

\bibitem{Kal} G. Kalai, \emph{Rigidity and the lower bound theorem 1}, Inventiones mathematicae, 88 (1987) 125--151.

\bibitem{laman} G. Laman, \emph{On graphs and rigidity of plane skeletal structures}, Journal of Engineering Mathematics, 4 (1970) 331--340.

\bibitem{Max} J.~C.~Maxwell, {\em On the calculation of the equilibrium and stiffness of frames}, The London, Edinburgh, and Dublin Philosophical Magazine and Journal of Science, 27:182 (1864) 294--299.

\bibitem{PG} H. Pollaczek-Geiringer, \emph{Uber die Gliederung ebener Fachwerke}, Zeitschrift fur Angewandte Mathematik und Mechanik (ZAMM), 7 (1927) 58--72.


\bibitem{SST} B. Schulze, H. Serocold and L. Theran, \emph{Frameworks with Coordinated Edge Motions}, SIAM Journal on Discrete Mathematics, 36:4 (2022) 2602--2618.

\bibitem{Wlong} W. Whiteley, Some matroids from discrete applied geometry,  in \emph{Matroid Theory}, J. E. Bonin, J. G. Oxley, and B.
Servatius eds., Contemporary
Mathematics 197, American Mathematical Society (1996) 171--313.

\bibitem{Robotics}
D. Zelazo et al, \emph{Rigidity Maintenance Control for Multi-Robot Systems}, Proceedings of Robotics: Science and Systems (2012).
\end{thebibliography}
\def\lfhook#1{\setbox0=\hbox{#1}{\ooalign{\hidewidth
  \lower1.5ex\hbox{'}\hidewidth\crcr\unhbox0}}}

\end{document}